\def\Sym{\mathrm{Sym}}
\def\Hom{\mathrm{Hom}}
\def\Ext{\mathrm{Ext}}
\def\TC{T\langle C\rangle}
\def\olTC{T\langle\ol C\rangle}
\def\wt{\widetilde}
\def\bP{\mathbb{P}}
\def\lra{\longrightarrow}
\def\cO{\mathcal{O}}
\def\F{\mathcal{F}}
\def\A{\mathcal{A}}
\def\J{\mathcal{J}}
\def\I{\mathcal{I}}
\def\E{\mathcal{E}}
\def\T{\mathcal{T}}
\def\C{\mathcal{C}}
\def\M{\mathcal{M}}
\def\V{\mathcal{V}}
\def\TC{T\langle C\rangle}
\def\dim{\mathrm{dim}}
\def\ol{\overline}
\def\wt{\widetilde}
  \newtheorem{theorem}{Theorem}[section]
\newtheorem{lemma}[theorem]{Lemma}
\newtheorem{prop}[theorem]{Proposition}
\newtheorem{corollary}[theorem]{Corollary}
\title{IVHS of nodal plane curves}
\author{Edoardo Sernesi }
\date{}
\begin{document}

\maketitle
\begin{abstract}
   Let $\V_{d,n}$ be the Severi variety of irreducible plane curves of degree  $d\ge 4$ having $n$ nodes, with $0\le n \le \binom{d-1}{2}-1$.  We prove that for every   $[\ol C]\in \V_{d,n}$, the infinitesimal variation of the Hodge structure of the normalization $C$ of $\ol C$ is  maximal as $[\ol C]$  moves in $\V_{d,n}$. As a preliminary result, we also prove that the family of curves of genus $g \ge 1$ mapping with degree $d \ge 2$ to a fixed curve $Y$ of genus $\pi$ has maximal variation if and only if $\pi = 0$.
\end{abstract}

\section{Introduction}
All schemes in this paper will be defined over $\mathbb C$, the field of complex numbers.

Let $g \ge 1$ and let $\M_g$, resp. $\A_g$, be the stack of smooth projective curves of genus $g$, respectively the stack of principally polarized abelian varieties of dimension $g$. The \emph{Torelli map}
$$
\T: \M_g \lra \A_g
$$
assigns to a curve $[C]\in \M_g$ its   polarized jacobian $\T([C])=(J(C),\Theta)$ where $\Theta$ is a symmetric theta divisor. At a given $[C]$ the differential of this morphism is a linear map
$$
d\T_{[C]}: H^1(C,T_C) \lra \Sym^2H^1(C,\cO_C)
$$
(see e.g. \cite[Example 3.4.24(iii)]{eS06}) whose codomain we will view as the subset
$$
\Hom_{\frak S}(H^0(C,\omega_C),H^1(C,\cO_C))\subset \Hom(H^0(C,\omega_C),H^1(C,\cO_C))
$$
consisting of self-dual linear maps. $d\T_{[C]}$ can be described in more than one way. Firstly, following \cite{OS79} and \cite{pG83}, we can interpret  $d\T_{[C]}$ as the dual of  the natural multiplication map
$$
\mu: \Sym ^2 H^0(C,\omega_C)\lra H^0(C,\omega_C^{\otimes 2})
$$
 Alternatively, one can describe $d\T_{[C]}$ by representing each element $v\in H^1(C,T_C)=\Ext^1(\omega_C,\cO_C)$ by an extension 
$$
v: 0 \lra \cO_C \lra \E \lra \omega_C \lra 0
$$
and letting $d\T_{[C]}(v)=\delta_v$, where
$$
\delta_v:H^0(C,\omega_C)\lra H^1(C,\cO_C)
$$
is the coboundary map of $v$. It is straightforward to check that the two descriptions of 
$d\T_{[C]}$ coincide.   The map $d\T_{[C]}(v)=\delta_v$ is called \emph{infinitesimal variation of the Hodge structure} (IVHS) of $C$ along $v$, and the map $d\T_{[C]}$ is also called \emph{IVHS map}.

\emph{Torelli's Theorem} asserts that $\T$ is injective. Nevertheless $\T$ is not everywhere an embedding: the \emph{Infinitesimal Torelli's Theorem} asserts that $\T$ is an embedding at $[C]$, i.e. $d\T_{[C]}$ is injective, if and only if  $g \le 2$ or $g \ge 3$ and $C$ is non-hyperelliptic.   Using the first interpretation of $d\T_{[C]}$ this fact is an immediate consequence of the classical \emph{Noether's Theorem}, which asserts that $\mu$ is surjective if and only if $g \le 2$ or $g\ge 3$ and $C$ is non-hyperelliptic.

Observing that the IVHS map $d\T_{[C]}(v)=\delta_v$ is a linear map between two vector spaces (of the same dimension $g$), we can associate to $v$ the rank of $\delta_v$, which is called \emph{variation} of $v$ in the literature. One says that $v$ has \emph{maximal variation} if $\delta_v$ is an isomorphism, i.e. if $v$ has variation equal to $g$.  This notion should not be confused with the one considered in \cite{DH21} and in \cite{aB25} with a different meaning.

It is natural to ask what are the possibilities for the variation of $v$, i.e. for the rank of   $\delta_v$, in particular whether it can be maximal, as $C$ and $v\in H^1(C,T_C)$ vary under suitable given constrains. 
 In the most general situation, namely when $C$ and $v$ are arbitrary, we have the following
 
 \begin{prop}[\cite{pG83}, Corollary (5.20)]\label{P:vararbcur}
    For any curve $C$ of genus $g \ge 1$, there is  $v\in H^1(C,T_C)$ having maximal variation.
 \end{prop}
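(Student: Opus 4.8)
The plan is to use the first description of $d\T_{[C]}$ recalled above, as the transpose of the multiplication map $\mu\colon\Sym^2H^0(\omega_C)\to H^0(\omega_C^{\otimes 2})$. Under Serre duality $H^1(C,T_C)=H^0(\omega_C^{\otimes 2})^\vee$ and $H^1(\cO_C)=H^0(\omega_C)^\vee$; thus an element $v\in H^1(C,T_C)$ is a linear form $\ell$ on $H^0(\omega_C^{\otimes 2})$, and $\delta_v=d\T_{[C]}(v)\colon H^0(\omega_C)\to H^1(\cO_C)=H^0(\omega_C)^\vee$ is exactly the map $\alpha\mapsto B_\ell(\alpha,-)$ attached to the symmetric bilinear form $B_\ell(\alpha,\beta)=\ell(\alpha\beta)$ on $H^0(\omega_C)$. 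Since both spaces have dimension $g$, the map $\delta_v$ is an isomorphism precisely when $B_\ell$ is nondegenerate (equivalently, $h^0(\E)=1+\dim\ker\delta_v$ for the extension $v\colon0\to\cO_C\to\E\to\omega_C\to0$). So the Proposition reduces to exhibiting one $\ell\in H^0(\omega_C^{\otimes 2})^\vee$ whose associated quadratic form $B_\ell$ on $H^0(\omega_C)$ is nondegenerate.

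I would produce such an $\ell$ as a sum of point evaluations. Choose $g$ general points $p_1,\dots,p_g\in C$ and trivializations $(\omega_C)_{p_i}\cong\mathbb{C}$. A standard general-position argument gives $h^0(\omega_C(-p_1-\cdots-p_g))=0$ (for $g\ge 2$ because the locus of effective classes of degree $g-2$ is a proper closed subset of $\Pic^{g-2}(C)$; trivially for $g=1$), so the evaluation map $\mathrm{ev}\colon H^0(\omega_C)\xrightarrow{\sim}\bigoplus_{i=1}^g(\omega_C)_{p_i}\cong\mathbb{C}^g$ is an isomorphism. Now set $\ell(\sigma)=\sum_{i=1}^g\sigma(p_i)$ for $\sigma\in H^0(\omega_C^{\otimes 2})$, using the induced trivializations of $\omega_C^{\otimes 2}$. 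Then
$$
B_\ell(\alpha,\beta)=\sum_{i=1}^g\alpha(p_i)\beta(p_i)=\big\langle\,\mathrm{ev}(\alpha),\mathrm{ev}(\beta)\,\big\rangle
$$
is the pullback under the isomorphism $\mathrm{ev}$ of the standard nondegenerate pairing on $\mathbb{C}^g$, hence nondegenerate, and the corresponding $v\in H^1(C,T_C)$ has maximal variation.

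A more uniform alternative is to show directly that the non-maximal locus is proper. For each nonzero $t\in H^0(\omega_C)$ the linear map $H^1(C,T_C)\to H^1(\cO_C)$, $v\mapsto\delta_v(t)$, equals the transpose $m_t^\vee$ of multiplication $m_t\colon H^0(\omega_C)\to H^0(\omega_C^{\otimes 2})$, which is injective because $C$ is integral; hence $v\mapsto\delta_v(t)$ is surjective. Therefore the incidence locus $\{(v,[t])\in H^1(C,T_C)\times\P(H^0(\omega_C)):\delta_v(t)=0\}$ is a vector subbundle of rank $\dim H^1(C,T_C)-g$ over $\P^{g-1}$, of total dimension $\dim H^1(C,T_C)-1$, so its image $\{v:\ker\delta_v\ne0\}$ is a proper closed subset of $H^1(C,T_C)$, and its nonempty complement consists of classes of maximal variation. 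Neither argument hides a real difficulty: beyond Serre duality and linear algebra, the only input is the elementary general-position statement that $g$ general points impose independent conditions on $H^0(\omega_C)$ (equivalently, that a general line bundle of degree $g$ is nonspecial). The one conceptual point to pin down is precisely the translation, via the transpose of $\mu$, of \emph{maximal variation of $v$} into \emph{nondegeneracy of the quadratic form $B_\ell$}.
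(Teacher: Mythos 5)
The paper does not prove Proposition \ref{P:vararbcur}: it is quoted from Griffiths \cite{pG83}, Corollary (5.20), with a pointer to \cite{FP21}, Lemma 1.3, for an alternative proof, so there is no in-text argument to compare yours against. That said, your proof is correct and complete. The translation of maximal variation of $v=\ell\in H^0(\omega_C^{\otimes 2})^\vee$ into nondegeneracy of the symmetric form $B_\ell(\alpha,\beta)=\ell(\alpha\beta)$ on $H^0(\omega_C)$ is exactly the right use of the identification $d\T_{[C]}=\mu^\vee$ recalled in the introduction, and your witness $\ell=\sum_i \mathrm{ev}_{p_i}$ works because $g$ general points impose independent conditions on $|\omega_C|$ (i.e.\ $h^0(\omega_C(-p_1-\cdots-p_g))=0$), which makes $B_\ell$ the pullback of the standard pairing on $\mathbb{C}^g$ under an isomorphism; this is in the same spirit as the known proofs and is arguably the cleanest route. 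Your second, dimension-count argument is also valid: the key input that $v\mapsto\delta_v(t)$ is surjective for each $t\ne 0$ is correctly reduced to injectivity of multiplication by $t$ on $H^0(\omega_C)$, and properness of $\P(H^0(\omega_C))$ makes the bad locus closed of codimension $\ge 1$. The only cosmetic caveat is that the identifications $H^1(T_C)\cong H^0(\omega_C^{\otimes 2})^\vee$ and $H^1(\cO_C)\cong H^0(\omega_C)^\vee$ are via Serre duality up to a universal nonzero scalar, which is irrelevant for the rank statement. Either of your two arguments would serve as a self-contained proof; the first matches the level of explicitness of \cite{FP21}, while the second is closer to the determinantal/general-position viewpoint of \cite{pG83}.
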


For a different proof see also \cite{FP21}, Lemma 1.3. 
 Given the above, the next natural questions arise if we assume that $C$ is not arbitrary but appears as the fibre $C\cong \C(b)$ over a point $b\in B$ of a  family $f:\C \lra B$ of projective nonsingular curves of genus $g$. We then consider the Kodaira-Spencer map
$$
\kappa_b(f): T_bB \lra H^1(C,T_C)
$$
and the composition
\begin{equation}
    T_bB \xrightarrow{\kappa_b(f)}H^1(C,T_C)\xrightarrow{d\T_{[C]}}\Hom_{\frak S}(H^0(C,\omega_C),H^1(C,\cO_C))
\end{equation}
which we call  \emph{IVHS map of $f$ at $b$}.
  We can ask about the variation of $v\in \mathrm{Im}(\kappa_b(f))$. If $\kappa_b(f)$ is not surjective (meaning that the family $f$ has \emph{special moduli)} then it is not at all clear  what are the possible values of the variation of $v\in \mathrm{Im}(\kappa_b(f))$, nor if there exists such a $v$ with maximal variation. If it exists we say that $f$ \emph{has maximal variation at} $b$, or that $C$ \emph{has maximal variation as a member of} $f$. Similarly we will call   \emph{minimal variation} of $f$ at $b$ to be the minimal value of the variation  of $v$ among all $0\ne v\in \mathrm{Im}(\kappa_b(f))$. 

  Typical examples   one wants to consider are linear systems of curves on algebraic surfaces. 
  Let $S$ be a nonsingular algebraic surface,   $L$ a line bundle on $S$ and $B\subset |L|$   the open subset parametrizing   nonsingular connected curves. Then $f:\C \lra B$  is the family of curves parametrized by $B$, and   $C:=\C_b$ for some $b\in B$. 
We have the following result by Favale-Pirola and by Favale-Naranjo-Pirola in the case $S=\bP^2$ and $L=\cO_{\bP^2}(d)$.

\begin{theorem}\label{T:FPintro}
\begin{itemize}
    \item[(i)]\cite{FP21} Let $C\subset \bP^2$ be a nonsingular curve of degree $d \ge 4$. Then $C$ has maximal variation as a member of  the family of nonsingular plane curves of degree $d$.
    \item[(ii)]\cite{FNP17} The minimal possible variation for a plane curve of degree $d\ge 5$ is $d-3$, and the minimum is attained by the Fermat curve.
\end{itemize}
\end{theorem}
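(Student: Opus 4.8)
The plan is to pass to the Jacobian ring and reduce both statements to the rank of a single multiplication map. Write $C=\{F=0\}\subset\bP^2$, let $J_F=(\partial_0F,\partial_1F,\partial_2F)$ be the Jacobian ideal in $P=\mathbb{C}[x_0,x_1,x_2]$ and $R_F=P/J_F$. Since $C$ is smooth the partials form a regular sequence, so $R_F$ is a graded Artinian Gorenstein complete intersection with socle in degree $3d-6$. By Griffiths' theory of the IVHS of a smooth hypersurface (Carlson--Green--Griffiths--Harris) there are canonical isomorphisms $H^0(C,\omega_C)\cong (R_F)_{d-3}$ and $H^1(C,\cO_C)\cong (R_F)_{2d-3}$, both of dimension $g=\binom{d-1}{2}$ by Gorenstein duality; the image of the Kodaira--Spencer map of the family of all degree-$d$ plane curves is the whole of $(R_F)_d$; and under these identifications the IVHS map $\delta_v$ attached to $v\in (R_F)_d$ is the multiplication map $\cdot v\colon (R_F)_{d-3}\to (R_F)_{2d-3}$. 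Hence the variation of $v$ equals $\mathrm{rank}(\cdot v)$, maximal variation means $\cdot v$ is an isomorphism for some $v$, and the minimal variation is $\min_{0\ne v}\mathrm{rank}(\cdot v)$. I will also use the symmetric form $Q_v(a,b)=abv\in (R_F)_{3d-6}\cong\mathbb{C}$ on $(R_F)_{d-3}$: by perfection of the Gorenstein pairing $\mathrm{rank}(\cdot v)=\mathrm{rank}(Q_v)$, so $\cdot v$ is an isomorphism exactly when $Q_v$ is nondegenerate.

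For (i) the plan is to produce a nondegenerate $Q_v$ for every smooth $C$. Since the multiplication $\Sym^2(R_F)_{d-3}\to (R_F)_{2d-6}$ is surjective and $(R_F)_d\cong (R_F)_{2d-6}^{\vee}$, the forms $Q_v$ sweep out a linear system of quadrics on $\bP((R_F)_{d-3})=\bP^{g-1}$, and I must show that this system is not contained in the discriminant hypersurface. Bounding the degenerate locus $\{[v]:\exists\,0\ne a,\ av=0\}$ by a standard incidence/determinantal count reduces this to the maximal-rank statement that a general $a\in (R_F)_{d-3}$ multiplies injectively into $(R_F)_{2d-6}$; this is the ``upper'' counterpart of the extremal estimate needed in (ii), and, like it, must be established uniformly over all smooth $C$ from the complete-intersection structure and the regularity of $J_F$.

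For (ii) I would first read off the value $d-3$. The rank formula rewrites as $\mathrm{rank}(\cdot v)=\dim (S_v)_{d-3}$, where $S_v:=R_F/\mathrm{Ann}(v)$; by Macaulay duality $S_v$ is again graded Artinian Gorenstein, now of socle degree $2d-6$ and embedding codimension $\le 3$, so (Stanley, codimension $\le 3$) its Hilbert function is symmetric about $d-3$ and unimodal, and $\dim(S_v)_{d-3}$ is its maximum. On the Fermat curve $F=\sum_i x_i^d$ one has $R_F=P/(x_0^{d-1},x_1^{d-1},x_2^{d-1})$, and for $v=x_0^{d-2}x_1^2$ a direct monomial computation gives $S_v\cong \mathbb{C}[x_1,x_2]/(x_1^{d-3},x_2^{d-1})$, whose maximal Hilbert value is exactly $d-3$; thus variation $d-3$ is attained. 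It then remains to prove the matching lower bound $\dim(S_v)_{d-3}\ge d-3$ for every smooth $C$ and every $0\ne v$. The cleanest first step is that no two independent linear forms can annihilate $v$: if $\ell_1v=\ell_2v=0$ then $S_v$ is a quotient of $R_F/(\ell_1,\ell_2)R_F$, and writing $p=V(\ell_1,\ell_2)\in\bP^2$ one has $\partial_iF\equiv \partial_iF(p)\,\ell_3^{\,d-1}\ \mathrm{mod}\,(\ell_1,\ell_2)$, where by Euler's identity and the smoothness of $C$ the values $\partial_iF(p)$ do not all vanish; hence $R_F/(\ell_1,\ell_2)R_F\cong\mathbb{C}[\ell_3]/(\ell_3^{\,d-1})$ has socle degree $d-2<2d-6$, contradicting that its quotient $S_v$ has socle degree $2d-6$. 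This forces $\dim(S_v)_1\ge 2$ and pinpoints the hypothesis $d\ge 5$.

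The main obstacle is to upgrade this step into the full lower bound: one must show that $\mathrm{Ann}(v)$ carries no unexpectedly low-degree relations, i.e.\ that the lowest-degree generator of $\mathrm{Ann}(v)$ beyond its (at most one) linear form has degree $\ge d-3$, so that the Hilbert function of $S_v$, which already grows like that of a polynomial ring in two variables, actually reaches the value $d-3$ before the middle degree; symmetry and unimodality then propagate the inequality to degree $d-3$ and yield $\mathrm{rank}(\cdot v)\ge d-3$. I expect this degree estimate---carried out uniformly in $v$ by the same mechanism as above, comparing socle degrees after quotienting $R_F$ by candidate low-degree elements of $\mathrm{Ann}(v)$ and invoking the regularity of the smooth Jacobian ideal---to be the crux of the whole argument; the Fermat computation shows it is sharp and that the global minimum $d-3$ is attained there, while the existence statement (i) sits at the opposite extreme of the same analysis of ranks of multiplication in $R_F$.
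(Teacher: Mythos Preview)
First a point of context: in this paper Theorem~\ref{T:FPintro} is a quoted result, attributed to \cite{FP21} and \cite{FNP17}, and no direct proof is given at that point. Part~(ii) is never proved here, so there is nothing to compare your argument against. Part~(i), on the other hand, is reproved later as the nodeless case $n=0$ of Theorem~\ref{T:nodalintro}, via Proposition~\ref{T:ffinal} and Lemma~\ref{L:Zexists2}. That route is entirely different from your Jacobian--ring approach: the paper works with the logarithmic sheaf $\olTC$, identifies the IVHS with a coboundary map, and constructs an explicit $\wt\sigma\in H^0(\bP^2,\cO(d))$ by picking two general combinations $F_1,F_2$ of the partials, choosing a subset $Z\subset F_1\cap F_2$ of $\binom{d}{2}$ points imposing independent conditions on curves of degree $d-2$, and using Cayley--Bacharach to force the complementary set $Y$ to impose independent conditions on degree $d-3$; any $\wt\sigma$ through $Z$ and avoiding $Y$ then works. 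No Lefschetz-type property of $R_F$ is used, and the payoff is that the same construction extends to nodal curves.

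Your framework is the right one, but both parts are left incomplete at the decisive step, and in~(i) the claimed reduction is not valid as stated. Knowing that a \emph{general} $a\in (R_F)_{d-3}$ multiplies injectively into $(R_F)_{2d-6}$ (equivalently, by Gorenstein duality, that $\cdot a:(R_F)_d\to(R_F)_{2d-3}$ is surjective) controls only the generic fibre of the incidence $I=\{([a],[v]):av=0\}$; over the closed locus of special $a$'s the fibres jump, and without separate bounds on the dimensions of the degeneracy strata your ``standard incidence/determinantal count'' does not force $\dim I<\dim\bP((R_F)_d)$. What actually yields~(i) in this language is the Strong Lefschetz Property for $R_F$ (take $v=\ell^{\,d}$ for a general linear form $\ell$, so that $\cdot v:(R_F)_{d-3}\to(R_F)_{2d-3}$ is an isomorphism), a nontrivial input you neither invoke nor prove. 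For~(ii) your Macaulay--Gorenstein setup, the Fermat computation, and the ``at most one linear annihilator'' step are all correct, but you explicitly leave the main lower bound on the Hilbert function of $S_v$ as ``the crux of the whole argument''; that is precisely the content of~(ii), so nothing is established beyond the upper bound.
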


The case of $S=\bP^1\times \bP^1$ has   been studied by Gonzalez-Alonso and Torelli. Their result is the following:

\begin{theorem}[\cite{GT24}]\label{T:GAintro}
    Let $C$ be a nonsingular ample curve in $\bP^1\times \bP^1$. Then the family of deformations of $C$ in $|C|$ has maximal variation at $[C]$. 
\end{theorem}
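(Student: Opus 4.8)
The plan is to reformulate maximal variation as a statement about multiplication in a Jacobian ring of $C$ inside $S:=\bP^1\times\bP^1$, and then to produce a good deformation direction by degenerating $C$ to a grid of rulings. Write $(a,b)$ for the bidegree of $C$, so $g=g(C)=(a-1)(b-1)$, $N_{C/S}=\O_C(C)=\O_C(a,b)$, $T_S=\O_S(2,0)\oplus\O_S(0,2)$ and $\omega_C=\O_C(a-2,b-2)$. If $a=1$ or $b=1$ then $g=0$ and $d\T_{[C]}$ is the zero map $0\ra 0$, so there is nothing to prove; assume $a,b\ge 2$. Since $H^1(S,\O_S)=0$, the tangent space to $|C|$ at $[C]$ is $H^0(C,N_{C/S})$ and $\mathrm{Im}(\kappa_{[C]}(f))$ is the image of the connecting map $\partial\colon H^0(C,N_{C/S})\ra H^1(C,T_C)$ of $0\ra T_C\ra T_S|_C\ra N_{C/S}\ra 0$. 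As domain and codomain of $\delta_v$ have equal dimension $g$, it suffices to find $v\in\mathrm{Im}(\partial)$ with $\delta_v$ injective.

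First I would make the cup product explicit on $S$. For $v=\partial(\sigma)$, compatibility of connecting homomorphisms with cup products gives the factorization
\[
\delta_v\colon H^0(C,\omega_C)\xrightarrow{\,\cdot\sigma\,}H^0(C,N_{C/S}\otimes\omega_C)\xrightarrow{\,\partial'\,}H^1(C,\O_C),
\]
where $\partial'$ is the connecting map of the twist of the normal sequence by $\omega_C$ (using $T_C\otimes\omega_C\cong\O_C$); thus $\omega\in\ker\delta_v$ precisely when $\sigma\cdot\omega$ lies in the image of $H^0(C,T_S|_C\otimes\omega_C)$. Using the decomposition of $T_S$, the vanishing of the relevant $H^1$'s on $S$ and on $C$, and pulling everything back to the Cox ring of $S$, this turns into a statement of Griffiths–Batyrev–Cox type: there is a bigraded Artinian Gorenstein $\mathbb C$-algebra $R=R(C)$ (the Jacobian ring of $C\subset S$) with one-dimensional socle in bidegree $(3a-4,3b-4)$, with $R_{(a-2,b-2)}\cong H^0(C,\omega_C)$ and $R_{(2a-2,2b-2)}\cong H^1(C,\O_C)$ (both of dimension $g$), with perfect socle pairing $R_{(a-2,b-2)}\times R_{(2a-2,2b-2)}\ra R_{(3a-4,3b-4)}\cong\mathbb C$, and such that the classes $v\in\mathrm{Im}(\partial)$ correspond to the elements $\sigma\in R_{(a,b)}$ while $\delta_{v_\sigma}$ is identified with multiplication by $\sigma$. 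Hence $C$ has maximal variation in $|C|$ if and only if some $\sigma\in R_{(a,b)}$ makes $R_{(a-2,b-2)}\xrightarrow{\,\cdot\sigma\,}R_{(2a-2,2b-2)}$ an isomorphism.

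For the existence of such a $\sigma$ I would argue as follows. By the perfect socle pairing, $\cdot\sigma$ is an isomorphism iff $\sigma\omega\ne 0$ in $R_{(2a-2,2b-2)}$ for every $0\ne\omega\in R_{(a-2,b-2)}$; a standard incidence-variety dimension count then shows such a $\sigma$ exists provided $\dim R_{(a,b)}\ge g$ (a Hilbert-function check) and there is one $0\ne\omega\in R_{(a-2,b-2)}$ for which $R_{(a,b)}\xrightarrow{\,\cdot\omega\,}R_{(2a-2,2b-2)}$ is surjective — equivalently, by the socle pairing again, one $\omega$ for which $R_{(a-2,b-2)}\xrightarrow{\,\cdot\omega\,}R_{(2a-4,2b-4)}$ is injective. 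To produce such an $\omega$ I would specialize $C$, inside $|C|$, to the totally degenerate curve $C_0$ formed by $a$ fibers of the first ruling and $b$ fibers of the second: $C_0$ is $ab$-nodal with rational components and dual graph of first Betti number $(a-1)(b-1)=g$, so its generalized Jacobian is an algebraic torus, and both the limit mixed Hodge structure and the limit of the IVHS along a general smoothing in $|C|$ admit an explicit combinatorial description in terms of the grid. On that model the required injectivity reduces to the nonvanishing of a Vandermonde-type determinant in the distinct positions of the $a+b$ rulings; lower semicontinuity of rank then gives maximal variation at the general smooth member of $|C|$, and running the degeneration through a prescribed curve (or using flatness of the family of Jacobian rings over the space of smooth bidegree-$(a,b)$ curves) should upgrade it to every nonsingular ample $C$.

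The main obstacle I expect is precisely this last upgrade: for special curves — for instance the hyperelliptic ones of bidegree $(2,b)$, or Fermat-type curves — the ring $R(C)$ need not have the generic Lefschetz behaviour, so semicontinuity alone is not enough, and one must either arrange the grid degeneration so that $C_0$ can be smoothed to an arbitrary prescribed $C$ in $|C|$, or analyze $R(C)$ directly in those cases. This is where the geometry special to $\bP^1\times\bP^1$ — that $C$ is cut out by two pencils of rational curves — is genuinely used, and it is the step requiring the most care.
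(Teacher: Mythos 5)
This statement is not proved in the paper: it is quoted from Gonzalez-Alonso--Torelli \cite{GT24} as background, so there is no internal proof to measure your argument against. Judged on its own terms, your proposal has two genuine gaps. First, the incidence-variety count in your second step is not sufficient as stated. To conclude that $\bigcup_{[\omega]}\ker\bigl(\cdot\omega\colon R_{(a,b)}\to R_{(2a-2,2b-2)}\bigr)$ is a proper subset of $R_{(a,b)}$ you must control \emph{every} rank stratum: if $W_k\subset\bP(R_{(a-2,b-2)})$ denotes the locus where $\cdot\omega$ has rank $\le k$, the union of kernels has dimension $\le\max_k\bigl(\dim W_k+\dim R_{(a,b)}-k\bigr)$, so you need $\dim W_k<k$ for all $k$, not merely the existence of one $\omega$ for which $\cdot\omega$ is surjective (that only handles the open stratum $k=g$). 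Establishing $\dim W_k<k$ for the lower strata is exactly the hard content of such arguments (compare the stratified counts behind Proposition \ref{P:vararbcur} in \cite{pG83} and \cite{FP21}), and nothing in your sketch supplies it.

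Second, and as you yourself acknowledge, the degeneration to the grid of rulings combined with semicontinuity can only yield maximal variation for the \emph{general} member of $|\cO(a,b)|$, whereas the theorem asserts it for \emph{every} nonsingular ample $C$. The proposed fixes (``arrange the degeneration through a prescribed curve'' or ``flatness of the family of Jacobian rings'') are not arguments: rank is only lower semicontinuous, so no specialization argument can transfer maximality from the general curve to a special one. Since the whole difficulty of results of this type (cf.\ Theorem \ref{T:FPintro}(i) and Theorem \ref{T:nodalintro}, whose proofs work with an arbitrary fixed curve and make explicit choices of $\sigma$ adapted to it) lies precisely in treating every curve rather than the generic one, this is not a loose end but the core of the problem left unaddressed. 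A smaller caveat: the Batyrev--Cox--type identification of $\mathrm{Im}(\kappa)$ and of $\delta_v$ with multiplication in a bigraded toric Jacobian ring with socle in bidegree $(3a-4,3b-4)$ is plausible and the degree bookkeeping checks out, but the Gorenstein/duality properties of that ring for an arbitrary smooth (not necessarily nondegenerate) $F$ would need to be verified rather than asserted.
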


Other interesting families to be considered are  families  of morphisms.  Given    a projective nonsingular variety $Y$ and a non-constant $\varphi:C \lra Y$, where $[C]\in \M_g$, there is a local universal family:
\begin{equation}\label{E:locunivfam}
    \xymatrix{
\C_B \ar[dr]_-f\ar[rr]^-\Phi&&B\times Y \ar[dl]\\
&B
}
\end{equation}
of deformations of $\varphi$,
uniquely defined up to etale base change (see \cite{DS17}).
The IVHS of the  family $f$ of curves of genus $g$ appearing in this diagram at the point $b\in B$ parametrizing $\varphi$ will be called, with abuse of language,  \emph{ IVHS of $\varphi$}, and we will talk about \emph{variation of $\varphi$} to mean the variation of $f$ at $b$. We want to compute such variation  in various cases. The first   case we consider is  when $Y$ is a curve. This is our result.

\begin{theorem}\label{T:Ycurve}
    Let $\varphi:C \lra Y$ be a  morphism of degree $d\ge 2$ from a nonsingular projective connected curve of genus $g>0$ to a nonsingular curve of genus $\pi$. Then $\varphi$ has maximal variation if and only if $\pi=0$.
    \end{theorem}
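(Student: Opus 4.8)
The plan is to first pin down $\mathrm{Im}(\kappa_b(f))$ and then treat the two implications separately. Let $f:\C_B\to B$ be the family in \eqref{E:locunivfam}, $b\in B$ the point with $\C_b\cong C$. By the deformation theory of morphisms with fixed target (\cite{DS17}), a first‑order deformation of $C$ with Kodaira--Spencer class $v$ extends to a first‑order deformation of $\varphi$ exactly when $v$ dies in $H^1(C,\varphi^*T_Y)$, so
$$
\mathrm{Im}\big(\kappa_b(f)\big)=V:=\ker\Big(H^1(C,T_C)\xrightarrow{\,d\varphi_*\,}H^1(C,\varphi^*T_Y)\Big).
$$
Since $\varphi$ is separable of degree $d$, the codifferential is an inclusion of line bundles $\varphi^*\omega_Y\hookrightarrow\omega_C$ with $\omega_C=\varphi^*\omega_Y(R)$ ($R$ the ramification divisor), hence $\varphi^*\omega_Y\cong\omega_C(-R)$ and dually $\varphi^*T_Y\cong T_C(R)$. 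The question becomes whether some $v\in V$ has $\delta_v$ invertible.

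For the case $\pi\ge1$, I would take $0\ne\beta\in H^0(Y,\omega_Y)$ (nonempty since $\pi\ge1$) and set $\eta_0:=\varphi^*\beta$, a nonzero element of $H^0(C,\omega_C)$ that lies in the subsheaf $\varphi^*\omega_Y$. The point is that the contraction $T_C\otimes\varphi^*\omega_Y\to\cO_C$ factors through $\varphi^*T_Y\otimes\varphi^*\omega_Y\to\varphi^*\cO_Y=\cO_C$, because $d\varphi:T_C\to\varphi^*T_Y$ is dual to $\varphi^*\omega_Y\hookrightarrow\omega_C$ under the duality pairings. By functoriality of cup product this gives $\delta_v(\eta_0)=v\cup\eta_0=(d\varphi_*v)\cup\eta_0=0$ for every $v\in V$. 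Thus $\eta_0$ lies in the kernel of every $\delta_v$, $v\in V$, so no class in $V$ has maximal variation and $\varphi$ does not have maximal variation.

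For the case $\pi=0$ we have $\varphi^*\omega_Y=\varphi^*\cO_{\bP^1}(-2)$ of degree $-2d<0$, hence
$$
H^0\big(C,\omega_C(-R)\big)=H^0\big(C,\varphi^*\omega_Y\big)=0 ;
$$
equivalently, writing $R=\sum_q m_q\,q$, the jet‑evaluation $H^0(C,\omega_C)\hookrightarrow\bigoplus_q H^0(\omega_C\otimes\cO_{m_q q})$ is injective. I would then produce $v\in V$ with $\delta_v$ invertible from a branch‑point‑moving deformation of $\varphi$: dualising $0\to T_C\to\varphi^*T_Y\to Q\to0$ with $Q=T_C(R)/T_C$ exhibits $V=\mathrm{coker}\big(H^0(\varphi^*T_Y)\to H^0(Q)\big)$, so $v$ is given by jet data $(c_{q,k})_{1\le k\le m_q}$ at the ramification points, and compatibility of Serre duality with the connecting map turns $\delta_v$ into the quadratic form $\eta\mapsto\sum_q Q_q(J_q\eta)$, where $J_q$ is the jet of $\eta$ at $q$ and $Q_q$ is the Hankel‑type form on the $m_q$‑dimensional jet space determined by $(c_{q,k})_k$. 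For simple branching $m_q=1$ and $Q_q=c_q\cdot(\text{value at }q)^2$, so $\delta_v$ becomes $\eta\mapsto\sum_q c_q\,\eta(q)^2$; taking $(c_q)$ supported on a set of ramification points whose evaluations span $H^0(C,\omega_C)^\vee$ — available since the jet map is injective — makes $\delta_v$ nondegenerate, hence an isomorphism. In general each $Q_q$ can be made nondegenerate on its block, and the injectivity of the global jet map should allow a choice of the $(c_{q,k})$ making $\sum_q Q_q\circ J_q$ nondegenerate on $H^0(C,\omega_C)$; then $v\in\mathrm{Im}(\kappa_b(f))$ has maximal variation and $\varphi$ has maximal variation.

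The direction $\pi\ge1$ and the identification of $V$ are formal; the work is in the $\pi=0$ direction. The \textbf{main obstacle} is to make the last step of the previous paragraph precise: to compute exactly the local residue (Schiffer‑type) contributions to $\delta_v$ for a branch‑point‑moving deformation, and — at ramification points of index $\ge3$, where the admissible block forms $Q_q$ are the special ``anti‑triangular Hankel'' ones rather than arbitrary quadratic forms — to arrange the jet data so that these blocks assemble, through the injective jet map $H^0(C,\omega_C)\hookrightarrow\bigoplus_q\big(\omega_C/\mathfrak m_q^{m_q}\omega_C\big)$, into a nondegenerate form on all of $H^0(C,\omega_C)$. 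For simply branched $\varphi$ (in particular every $\varphi$ of degree $2$) this reduces to the elementary selection of ramification points described above.
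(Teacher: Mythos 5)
Your identification of $\mathrm{Im}(\kappa_b(f))$ and your treatment of the case $\pi\ge 1$ are correct. Exhibiting $\varphi^*\beta$ as a common kernel element of every $\delta_v$ with $v\in V$ is a valid (and mildly different) packaging of the paper's observation that the coboundary of $\zeta_\varphi$ cannot be surjective because $H^1(C,\varphi^*T_Y\otimes\omega_C)\cong H^0(C,\varphi^*\omega_Y)^\vee\ne 0$; the two arguments are dual to one another.

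The case $\pi=0$ is where the content of the theorem lies, and there your argument has a genuine gap, which you yourself flag. You only complete the construction when $\varphi$ is simply branched. At a ramification point $q$ of index $e\ge 3$ the local block of the bilinear form $(\eta,\eta')\mapsto\langle\delta_v\eta,\eta'\rangle$ is constrained to be a Hankel form $\sum_{i+j+k=e-2}c_{q,k}\,a_ib_j$ on the $(e-1)$-dimensional jet space, and you do not establish that some choice of the $(c_{q,k})$ makes the direct sum of these blocks nondegenerate after restriction to the image of the jet map $H^0(C,\omega_C)\hookrightarrow\bigoplus_q\omega_C\otimes\cO_{m_qq}$. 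Injectivity of the jet map alone does not suffice: a nondegenerate form on the ambient space can perfectly well restrict degenerately to a subspace, and for $e\ge 3$ you cannot choose the blocks freely. The paper closes exactly this point by a divisor-theoretic device that treats all ramification profiles uniformly: write $R=R_1+R_2$ with $\deg R_1=g$ and $H^0(C,\omega_C(-R_1))=0$, and let $\sigma\in H^0(C,N_\varphi)$ be the section vanishing precisely on $R_2$. Then $\ker H^0(\sigma)=H^0(C,\omega_C(-R_1))=0$, every element of $\mathrm{Im}(H^0(\sigma))$ vanishes on $R_2$, and $\ker H^0(q)=H^0(C,\omega_CL^2(-R))=H^0(C,\omega_CL^2(-R_2))$ --- the last equality holding because $h^1(C,\omega_CL^2(-R_2))=h^0(C,\omega_C(-R_1))=0$ --- so any element of $\mathrm{Im}(H^0(q))$ vanishing on $R_2$ lifts into $\ker H^0(q)$ and is therefore zero. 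Lemma \ref{L:IVHSphi} then gives maximal variation with no local analysis of the Hankel blocks. To repair your proof you would either have to prove the nondegeneracy of the assembled Hankel blocks for a suitable choice of jet data, or replace your pointwise coefficients $(c_{q,k})$ by this single section supported on a subdivisor of $R$.
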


Using completely different (and quite interesting) methods Theorem \ref{T:Ycurve}  has been proved  in \cite{jZ25} for  trigonal curves.

\noindent
 We then turn to maps with target $Y=\bP^2$.
Our   result in this direction is the following one:

\begin{theorem}\label{T:nodalintro}
   Let $\ol C\subset \bP^2$ be an irreducible   curve of degree $d \ge 4$ and geometric genus $g \ge 1$ having only nodes (ordinary double points) as singularities. Then the map $\varphi:C \lra \bP^2$ induced by the normalization of $\ol C$ has maximal variation.
\end{theorem}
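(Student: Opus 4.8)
\medskip
\noindent\textbf{Proof strategy.}

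I would use the extension-theoretic form of the IVHS map recalled in the introduction. If $v\in H^1(C,T_C)=\Ext^1(\omega_C,\cO_C)$ is given by an extension $0\to\cO_C\to\E\to\omega_C\to 0$, then $\chi(\E)=0$ and $h^0(\E)\ge 1$, and the long exact cohomology sequence shows that $\delta_v$ is injective -- hence an isomorphism -- exactly when $h^0(\E)=1$. So Theorem~\ref{T:nodalintro} amounts to exhibiting a rank-two bundle $\E$ on $C$ with $\det\E=\omega_C$, $h^0(\E)=1$, and extension class in $\mathrm{Im}(\kappa_b(f))$, where $f$ is the family \eqref{E:locunivfam} of deformations of $\varphi$.

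First I would make $\mathrm{Im}(\kappa_b(f))$ explicit. Write $\ol C=\{F=0\}$, let $Z=\{p_1,\dots,p_n\}$ be its nodes and $\nu^{-1}(p_i)=\{q_i,q_i'\}$. Since $\bP^2$ is rigid, the deformation theory of the immersion $\varphi$ identifies $H^0(C,N_\varphi)$ with $T_{[\ol C]}\V_{d,n}$ and $\kappa_b(f)$ with the coboundary of $0\to T_C\to\varphi^*T_{\bP^2}\to N_\varphi\to 0$. Adjunction gives $\omega_C=\varphi^*\cO_{\bP^2}(d-3)(-\sum_i(q_i+q_i'))$, hence $N_\varphi=\varphi^*\cO_{\bP^2}(d)(-\sum_i(q_i+q_i'))$, whence $H^0(C,N_\varphi)\cong(I_Z)_d/\mathbb C F$ -- the degree-$d$ forms through the nodes modulo $F$ -- in accordance with Severi's smoothness theorem. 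Therefore $\mathrm{Im}(\kappa_b(f))$ is the image of the natural map $(I_Z)_d\to H^1(C,T_C)$, whose kernel is $\mathbb C F$ together with the at most eight-dimensional contribution of $PGL_3$ acting on $\ol C$. For $d=4$ this image is all of $H^1(C,T_C)$ and the statement reduces to Proposition~\ref{P:vararbcur}, so one may assume $d\ge 5$. It remains to find $G\in(I_Z)_d$ whose class $v=[G]$ has $h^0(\E_v)=1$, equivalently for which $\delta_v$ is injective on $H^0(C,\omega_C)=(I_Z)_{d-3}$, the adjoint linear system.

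Next I would give a model for $\E_v$. For $v=\kappa_b(f)(s)$ with $s\in H^0(C,N_\varphi)$ corresponding to $G$, the dual bundle $\E_v^\vee$ is the elementary modification of $\varphi^*T_{\bP^2}$ along the zero divisor $Z(s)=\mathrm{div}(\varphi^*G)-\sum_i(q_i+q_i')$ of $s$, a divisor of degree $d^2-2n$ -- the pull-back of the residual intersection of $\{G=0\}$ with $\ol C$ away from the nodes -- so that $0\to\E_v^\vee\to\varphi^*T_{\bP^2}\to N_\varphi|_{Z(s)}\to 0$; dually $0\to\varphi^*\Omega^1_{\bP^2}\to\E_v\to\mathcal Q\to 0$ with $\mathcal Q$ torsion of length $d^2-2n$. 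The pulled-back Euler sequence of $\bP^2$ gives $h^0(\varphi^*\Omega^1_{\bP^2})=0$, so $h^0(\E_v)\le d^2-2n$ and the goal is to choose $G$ so that exactly one section survives in $\E_v$. Equivalently, this is the non-degeneracy of the symmetric form $b_v(\alpha,\beta)=\langle\delta_v(\alpha),\beta\rangle$ on $(I_Z)_{d-3}$, which, by residue calculus in the spirit of Griffiths adapted to nodal plane curves, is given by $(\alpha,\beta)\mapsto G\alpha\beta$ mapped into a suitable one-dimensional residue quotient. When $n=0$, $(I_Z)_{d-3}=R_{d-3}$ for the Jacobian ring $R=S/(F_x,F_y,F_z)$, the quotient is the socle $R_{3(d-2)}\cong\mathbb C$, $\delta_v$ is multiplication by $G$ from $R_{d-3}$ to $R_{2d-3}$, and the fact that some $G$ makes this an isomorphism is exactly Theorem~\ref{T:FPintro}(i).

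The main obstacle is the genuinely nodal case $n>0$: one must track how restricting to the adjoint subspace $(I_Z)_{d-3}\subset S_{d-3}$ and passing to the defect-corrected residue quotient affect $b_v$, and show that $b_v$ can be made non-degenerate for \emph{every} nodal $\ol C$, not merely for a general one -- so a semicontinuity argument over $\V_{d,n}$ will not suffice. I would attack this with the adjoint-image method of Collino--Pirola and Favale--Pirola: were $b_v$ degenerate for every $v\in\mathrm{Im}(\kappa_b(f))$, then choosing a maximal $W\subseteq\ker\delta_v$ and applying the adjoint theorem would force $C$ to admit a non-constant morphism $C\to Y$ with $g(Y)<g$ carrying the relevant infinitesimal data, a configuration to be excluded from the geometry of the plane model -- the positive-genus alternative possibly with the aid of Theorem~\ref{T:Ycurve}, the genus-zero alternative by a direct comparison of the families of maps involved. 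In short, the crux is to show that the "bad" forms $G$ -- those for which $\ker\delta_{[G]}\ne 0$ on $(I_Z)_{d-3}$ -- do not exhaust $(I_Z)_d$.
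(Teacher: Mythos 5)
Your reduction is sound and in fact parallels the paper's own set-up: you identify $H^0(C,N_\varphi)$ with $(I_Z)_d/\mathbb{C}F$ (the paper's surjection $H^0(\bP^2,\I_\Delta(d))\to H^0(\ol C,\varphi_*N_\varphi)$), and you correctly recast maximal variation as the injectivity of $\delta_{[G]}$ on the adjoint system $(I_Z)_{d-3}=H^0(C,\omega_C)$ for a suitable $G\in(I_Z)_d$. But the argument stops exactly where the real work begins. For $n>0$ you never produce such a $G$, nor prove that one exists: your final paragraph only names a strategy (the Collino--Pirola / Favale--Pirola adjoint-image method) and lists the degenerate configurations that ``would have to be excluded,'' without excluding them. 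Since, as you yourself observe, semicontinuity over $\V_{d,n}$ is unavailable (the statement is for \emph{every} nodal curve, not a general one), the existence of a good $G$ for an arbitrary fixed $\ol C$ is the entire content of the theorem, and it is missing. There is also no reason given that the adjoint-image route terminates: ruling out the morphisms $C\to Y$ it would produce is itself an open-ended task. A smaller issue: your claim that for $d=4$ the image of the Kodaira--Spencer map is all of $H^1(C,T_C)$ is asserted, not proved, in the nodal case.

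For comparison, the paper's mechanism for this crucial step is concrete and quite different. It replaces $\varphi^*T_{\bP^2}$ by the sheaf $\olTC$ of vector fields tangent to $\ol C$, twisted by $d-3$, so that the target of the coboundary becomes $H^1(\bP^2,\olTC(d-3))\cong H^1(C,\cO_C)$ (using $h^1(\olTC(d-3))=g$ from \cite{DS14}); the subspace to be avoided is then $\mathrm{Im}\,H^0(\partial)$, the span of $aF_X+bF_Y+cF_Z$ with $a,b,c$ of degree $d-2$, and the theorem reduces to $\wt\sigma\cdot H^0(\I_\Delta(d-3))\cap\mathrm{Im}\,H^0(\partial)=(0)$. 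This is achieved by a Cayley--Bacharach decomposition $F_1\cap F_2=\Delta\cup Z\cup Y$ of the intersection of two general members of the Jacobian linear system, with $h^0(\I_{Z\cup\Delta}(d-2))=0=h^0(\I_Y(d-3))$, choosing $\wt\sigma$ to pass through $\Delta\cup Z$ and miss $Y$; the vanishing $H^0(\olTC(d-3))=0$ (no syzygies of degree $\le d-2$ among the partials of a nodal irreducible $F$) is also needed. Neither this construction nor any substitute for it appears in your proposal, so the proof is incomplete at its central point.
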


\noindent
 This theorem contains part (i) of Theorem \ref{T:FPintro} as a special case, but our proof is  different from the one given by Favale and Pirola.  It would be very interesting to extend Theorem \ref{T:nodalintro} to curves with more general singularities. Unfortunately our method of proof does not seem to apply   to curves having singularities different from nodes.

 The paper is organized as follows. In \S 2 we give a cohomological approach to IVHS of curves in families in various situations, and we recall some results, e.g. the non maximality of IVHS of curves on irregular surfaces. We also prove Theorem \ref{T:Ycurve}.
 In \S 3 we discuss families of nodal curves on surfaces, and in \S 4 we specialize to the case of $\bP^2$. Theorem \ref{T:nodalintro} is proved in the final \S 5.

 \medskip
{\bf Acknowledgements.}  For interesting conversations I am grateful to Andrea Bruno, Andreas Knutsen, Margherita Lelli Chiesa, Riccardo Salvati Manni. I warmly thank Victor Gonzalez-Alonso and  Sara Torelli for a careful reading of a first draft of this work.

\section{Cohomological set-up}\label{S:cohomol}

We fix  a non-constant morphism $\varphi: C \lra Y$   from a projective nonsingular curve $C$ of genus $g \ge 1$ to a nonsingular projective variety $Y$. We let \eqref{E:locunivfam} be the local universal family of deformations of $\varphi$. We have  an exact sequence  on $C$:
\begin{equation}\label{E:normphi}
    0 \lra T_C \lra \varphi^*T_Y \lra N_\varphi \lra 0
\end{equation}
whose coboundary map 
$$
\kappa_{\varphi}:H^0(C,N_\varphi)\lra H^1(C,T_C)
$$
is the Kodaira-Spencer map at $[\varphi]$ of  the family $f$ appearing in \eqref{E:locunivfam}. Composing with $d\T_{[C]}$ we get the IVHS map
\begin{equation}\label{E:IVHSphi} H^0(C,N_\varphi)\xrightarrow{\kappa_\varphi} H^1(C,T_C)\xrightarrow{d\T_{[C]}}\Hom_{\frak S}(H^0(C,\omega_C),H^1(C,\cO_C))
\end{equation}
whose maximal variation we want to compute. 
 For this purpose it is convenient to tensor \eqref{E:normphi} by $\omega_C$:
 \begin{equation*}
    \zeta_\varphi: 0 \lra \cO_C \lra \varphi^*T_Y\otimes\omega_C \lra N_\varphi\otimes\omega_C \lra 0
 \end{equation*}
For each $\sigma\in H^0(C,N_\varphi)$     the pullback of $\zeta_\varphi$ by $\sigma$ is $\kappa_\varphi(\sigma)\in H^1(C,T_C)$:
\begin{equation}\label{E:pullback}
    \xymatrix{
    \sigma^*(\zeta_\varphi)=\kappa_\varphi(\sigma):&0\ar[r]&\cO_C\ar@{=}[d]\ar[r]&\E\ar[d]\ar[r]&\omega_C\ar[r]\ar[d]^-\sigma&0 \\
    \zeta_\varphi:&0\ar[r]&\cO_C \ar[r]&\varphi^*T_Y\otimes\omega_C\ar[r]^-q&N_\varphi\otimes\omega_C\ar[r]&0
    }
\end{equation}
and therefore $d\T_{[C]}(\kappa_\varphi(\sigma))$ is the coboundary map of $\sigma^*(\zeta_\varphi)$, and its rank is the variation of $\sigma$.  The following lemma is obvious.

\begin{lemma}\label{L:IVHSphi}
   In the above situation, $\sigma$ has maximal variation if and only if $H^0(\sigma)$ is injective and
   \begin{equation}\label{E:inters}
       \mathrm{Im}(H^0(q))\cap \mathrm{Im}(H^0(\sigma))=(0)
   \end{equation}
\end{lemma}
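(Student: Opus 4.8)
The plan is to identify the total space $\E$ of the pulled-back extension $\sigma^*(\zeta_\varphi)=\kappa_\varphi(\sigma)$ as a fibre product, and then to read everything off from the long exact cohomology sequence of the top row of \eqref{E:pullback}. Since the pullback of an extension along a morphism is computed as a fibre product of sheaves, $\E$ is the fibre product of $\varphi^*T_Y\otimes\omega_C$ and $\omega_C$ over $N_\varphi\otimes\omega_C$, taken with respect to $q$ and to multiplication by $\sigma$. Global sections being left exact, they commute with fibre products, so
$$
H^0(C,\E)=H^0(C,\varphi^*T_Y\otimes\omega_C)\times_{H^0(C,N_\varphi\otimes\omega_C)}H^0(C,\omega_C),
$$
and under this identification the map $H^0(\E)\to H^0(\omega_C)$ occurring in \eqref{E:pullback} is the second projection.

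Next I would invoke the exact sequence
$$
0\lra H^0(C,\cO_C)\lra H^0(C,\E)\lra H^0(C,\omega_C)\xrightarrow{\ \delta\ }H^1(C,\cO_C),
$$
in which, by the discussion preceding the lemma, $\delta=d\T_{[C]}(\kappa_\varphi(\sigma))$ is the IVHS of $C$ along $\kappa_\varphi(\sigma)$, and $\rk\delta$ is the variation of $\sigma$. Since $h^0(\omega_C)=h^1(\cO_C)=g$, the map $\delta$ is an isomorphism if and only if it is injective, i.e. if and only if the preceding arrow $H^0(\E)\to H^0(\omega_C)$ is the zero map. Hence $\sigma$ has maximal variation precisely when the second projection $H^0(\E)\to H^0(\omega_C)$ vanishes.

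Finally I would translate this vanishing into \eqref{E:inters}. By the fibre product description, a section $s\in H^0(\omega_C)$ lies in the image of the projection $H^0(\E)\to H^0(\omega_C)$ exactly when $H^0(\sigma)(s)\in\mathrm{Im}(H^0(q))$; thus the image equals $\{s\in H^0(\omega_C):H^0(\sigma)(s)\in\mathrm{Im}(H^0(q))\}$. This subspace is zero if and only if, first, $H^0(\sigma)$ has trivial kernel (otherwise a nonzero $s$ with $H^0(\sigma)(s)=0\in\mathrm{Im}(H^0(q))$ would lie in it) and, second, granting injectivity, no nonzero element of $\mathrm{Im}(H^0(\sigma))$ lies in $\mathrm{Im}(H^0(q))$, which is \eqref{E:inters}; conversely these two conditions together force $H^0(\sigma)(s)\notin\mathrm{Im}(H^0(q))$ for every $s\ne 0$. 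The argument is entirely formal, so I do not expect a real obstacle; the only point requiring a little care is that "the projection has zero image" is equivalent to the \emph{conjunction} of injectivity of $H^0(\sigma)$ and \eqref{E:inters}, not to \eqref{E:inters} alone — because $0$ always belongs to $\mathrm{Im}(H^0(q))$, the intersection condition by itself does not detect the kernel of $H^0(\sigma)$.
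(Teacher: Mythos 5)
Your proof is correct and is precisely the standard argument the paper has in mind when it declares the lemma ``obvious'' without proof: identify $H^0(C,\E)$ as the fibre product of global sections, note that $\delta$ is an isomorphism iff it is injective (both spaces having dimension $g$) iff the projection $H^0(\E)\to H^0(\omega_C)$ has zero image, and translate. Your closing remark that the vanishing of that image is the \emph{conjunction} of injectivity of $H^0(\sigma)$ and \eqref{E:inters} is exactly the right point of care.
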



 This lemma  already suffices to prove Theorem \ref{T:Ycurve} in the  case $\dim(Y)=1$.

\medskip 

{\bf Proof of Theorem \ref{T:Ycurve}}.

\noindent 
Let    $\pi$ be the genus of $Y$, and $d=\deg(\varphi)$; we have the following expression for the degree  of the ramification divisor $R$ of $\varphi$: 
\begin{equation}\label{E:degR}
\deg(R)=\deg(\varphi^*T_Y\otimes\omega_C)=2[g-1+(1-\pi)d]
\end{equation}
Since $Y$ is a curve  $N_\varphi$ is torsion, hence $H^1(N_\varphi\otimes\omega_C)=0$. If $\pi>0$ then $H^1(C,\varphi^*T_Y\otimes\omega_C)\ne (0)$, and the coboundary of $\zeta_\varphi$ cannot be surjective. Therefore $\varphi$ does not have maximal variation: this proves the ``only if'' part of the theorem. 

Now assume $Y=\bP^1$ and let $L=\varphi^*\cO_{\bP^1}(1)$. Then $\deg(R)=2(d+g-1)$,  
$$
h^0(C,\varphi^*T_Y\otimes\omega_C)=h^0(C,\omega_CL^2)=2d+g-1, \quad H^1(C,\varphi^*T_Y\otimes\omega_C)=0
$$
and rk$(H^0(q))=2d+g-2$. Keeping  Lemma \ref{L:IVHSphi} in mind, we need to find $\sigma\in H^0(C,N_\varphi)$ such that \eqref{E:inters} holds and $H^0(\sigma)$ is injective. Choose an effective subdivisor $R_1<R$ of $\deg(R_1)=g$ such that $H^0(C,\omega_C(-R_1))=0$. Let $R_2=R-R_1$. Then $\deg(R_2)=2d+g-2$ and $h^0(C,\omega_CL^2(-R_2))=1=h^0(C,\omega_CL^2(-R))$. Therefore 
$$
\ker(H^0(q))= H^0(C,\omega_CL^2(-R))=H^0(C,\omega_CL^2(-R_2))
$$
We choose $\sigma=1 R_1+0 R_2$.  Then every nonzero element of $\mathrm{Im}[H^0(\sigma)]$ is zero on $R_2$ and therefore cannot be in $\mathrm{Im}(H^0(q))$. Moreover  $H^0(\sigma)$ 
 is injective by the choice of $R_1$. By Lemma \ref{L:IVHSphi} we see that $\sigma$ has maximal variation. 
\hfill$\square$
 
\medskip 
We now \emph{suppose that $Y$ is a nonsingular surface} and that $\varphi:C \lra Y$ is an immersion, i.e. that its differential is everywhere injective. Then $N_\varphi$ is   invertible and, to avoid trivial cases, we may assume that it satisfies $H^0(C,N_\varphi))\ne 0$. Therefore $N_\varphi\otimes\omega_C$ is nonspecial. From   \eqref{E:pullback} it follows that if $H^1(C,\varphi^*T_Y\otimes\omega_C)\ne 0$ then the coboundary map of $\zeta_\varphi$ cannot be surjective. We thus have the following:

\begin{prop}\label{P:necescond}
    In the above situation a necessary condition for $\varphi$ to have maximal variation is that $H^1(C,\varphi^*T_Y\otimes\omega_C)=0$ or, equivalently, that $H^0(C,\varphi^*\Omega^1_Y)=0$.
\end{prop}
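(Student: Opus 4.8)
\medskip
\noindent\textbf{Proof proposal.}
The plan is to read the necessary condition directly off the ladder \eqref{E:pullback}, without invoking the finer characterization of Lemma \ref{L:IVHSphi}. Fix $\sigma\in H^0(C,N_\varphi)$ and write $\delta_\sigma\colon H^0(C,\omega_C)\to H^1(C,\cO_C)$ for the coboundary of the top row of \eqref{E:pullback}, so that the variation of $\sigma$ equals $\mathrm{rk}(\delta_\sigma)$. Since $\dim H^0(C,\omega_C)=\dim H^1(C,\cO_C)=g$, the map $\delta_\sigma$ is an isomorphism if and only if it is surjective; hence $\varphi$ has maximal variation exactly when $\delta_\sigma$ is surjective for some $\sigma$, and it is enough to show that this forces $H^1(C,\varphi^*T_Y\otimes\omega_C)=0$.

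The first step is to observe that \eqref{E:pullback} is a morphism of extensions of $\cO_C$, hence induces a commutative square
$$
\xymatrix{
H^0(C,\omega_C)\ar[r]^-{\delta_\sigma}\ar[d]_-{H^0(\sigma)} & H^1(C,\cO_C)\ar@{=}[d]\\
H^0(C,N_\varphi\otimes\omega_C)\ar[r]_-{\delta_{\zeta_\varphi}} & H^1(C,\cO_C)
}
$$
where $\delta_{\zeta_\varphi}$ is the coboundary of $\zeta_\varphi$. Therefore $\mathrm{Im}(\delta_\sigma)\subseteq\mathrm{Im}(\delta_{\zeta_\varphi})$ for every $\sigma$, so maximal variation of $\varphi$ implies that the single map $\delta_{\zeta_\varphi}$ is surjective. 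The second step is to identify when $\delta_{\zeta_\varphi}$ is surjective: from the long exact sequence of $\zeta_\varphi$ one has $\mathrm{Im}(\delta_{\zeta_\varphi})=\ker\bigl(H^1(C,\cO_C)\to H^1(C,\varphi^*T_Y\otimes\omega_C)\bigr)$, and since $N_\varphi\otimes\omega_C$ is nonspecial the map $H^1(C,\cO_C)\to H^1(C,\varphi^*T_Y\otimes\omega_C)$ is surjective; hence it is the zero map, equivalently $\delta_{\zeta_\varphi}$ is surjective, if and only if $H^1(C,\varphi^*T_Y\otimes\omega_C)=0$. Combining the two steps yields the stated necessary condition.

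Finally, the equivalence with $H^0(C,\varphi^*\Omega^1_Y)=0$ is just Serre duality on $C$ applied to $\varphi^*T_Y\otimes\omega_C$ together with the canonical identification $(\varphi^*T_Y)^\vee=\varphi^*\Omega^1_Y$: this gives $H^1(C,\varphi^*T_Y\otimes\omega_C)^\vee\cong H^0\bigl(C,(\varphi^*T_Y)^\vee\bigr)=H^0(C,\varphi^*\Omega^1_Y)$, so the two vanishings coincide. I do not expect a real obstacle: the whole argument is formal (a diagram chase in \eqref{E:pullback} plus Serre duality), and the only point worth double-checking is that the nonspeciality of $N_\varphi\otimes\omega_C$ is used honestly, since it is exactly what upgrades ``$\delta_{\zeta_\varphi}$ surjective'' into the clean cohomological vanishing; in particular the degenerate case $N_\varphi\cong\cO_C$ should be kept out of the discussion, as the text already does.
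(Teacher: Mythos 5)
Your argument is correct and is precisely the one the paper uses (the paper compresses it into the single sentence ``From \eqref{E:pullback} it follows that if $H^1(C,\varphi^*T_Y\otimes\omega_C)\ne 0$ then the coboundary map of $\zeta_\varphi$ cannot be surjective''): the coboundary $\delta_\sigma$ factors through the coboundary of $\zeta_\varphi$, whose image is the kernel of $H^1(C,\cO_C)\to H^1(C,\varphi^*T_Y\otimes\omega_C)$, and nonspeciality of $N_\varphi\otimes\omega_C$ makes that map surjective, so surjectivity of $\delta_{\zeta_\varphi}$ is equivalent to the stated vanishing. Your Serre-duality identification of the two vanishing conditions and your caveat about the degenerate case $N_\varphi\cong\cO_C$ are both accurate.
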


The following corollary is in \cite{FP21}, Prop. 1.7:

\begin{corollary}\label{C:irregular}
  Let $C$ be an ample nonsingular curve in an algebraic surface $Y$ such that  $H^0(Y,\Omega^1_Y)\ne 0$. Then $C$ does not have maximal variation in $|C|$.  
\end{corollary}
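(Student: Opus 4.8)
The plan is to deduce Corollary~\ref{C:irregular} directly from Proposition~\ref{P:necescond}. Since $C\subset Y$ is an ample nonsingular curve, the inclusion $\varphi:C\hookrightarrow Y$ is an immersion (in fact an embedding), so Proposition~\ref{P:necescond} applies: a necessary condition for $\varphi$ to have maximal variation is that $H^0(C,\varphi^*\Omega^1_Y)=0$. Hence it suffices to show that under the hypothesis $H^0(Y,\Omega^1_Y)\neq 0$ one has $H^0(C,\varphi^*\Omega^1_Y)\neq 0$, i.e. that a nonzero global holomorphic $1$-form on $Y$ restricts to a nonzero global section of $\Omega^1_Y|_C$ on $C$.

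The key step is therefore the injectivity of the restriction map $H^0(Y,\Omega^1_Y)\to H^0(C,\Omega^1_Y|_C)$. I would argue as follows. Consider the exact sequence $0\to \Omega^1_Y(-C)\to \Omega^1_Y\to \Omega^1_Y|_C\to 0$ on $Y$, and take the associated long exact sequence in cohomology; the kernel of the restriction map is the image of $H^0(Y,\Omega^1_Y(-C))$. So it is enough to show $H^0(Y,\Omega^1_Y(-C))=0$. Since $C$ is ample, $C=mH$ numerically for some ample (or we may simply work with $\cO_Y(C)$ ample directly): a nonzero section of $\Omega^1_Y\otimes\cO_Y(-C)$ would give, twisting by the ample $\cO_Y(C)$, a section of $\Omega^1_Y$ vanishing along the ample divisor $C$; equivalently, one uses that for a locally free sheaf $\E$ and an ample line bundle $A$ one has $H^0(Y,\E\otimes A^{-1})=0$ whenever $H^0(Y,\E)$ is finite-dimensional and $A$ is sufficiently positive — but more cleanly, a global $1$-form on a smooth projective surface that vanishes on a nonempty divisor must vanish identically, because global holomorphic $1$-forms on compact Kähler manifolds are $d$-closed and their zero locus, if it contained an ample divisor, would contradict the non-vanishing of the wedge/integration against curves in that divisor. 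The cleanest formulation: $H^0(Y,\Omega^1_Y)$ injects into $H^0(C,\Omega^1_Y|_C)$ because any $\omega\in H^0(Y,\Omega^1_Y)$ restricting to $0$ on the ample curve $C$ would have its (closed) period vanish on a curve class that generates, forcing $\omega=0$.

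Concretely I would phrase the restriction-injectivity using that a nonzero holomorphic $1$-form $\omega$ on $Y$ is closed, hence defines a nonzero class in $H^1(Y,\cO_Y)^{*}$-side of the Hodge decomposition, and the restriction $H^1(Y,\C)\to H^1(C,\C)$ is injective on the $H^{1,0}$-part by the Lefschetz hyperplane / ampleness argument (for an ample curve in a surface, $H^1(Y,\Q)\to H^1(C,\Q)$ is injective), which is compatible with Hodge structures; therefore $H^{1,0}(Y)=H^0(Y,\Omega^1_Y)$ injects into $H^{1,0}(C)=H^0(C,\omega_C)$, and this factors through $H^0(C,\Omega^1_Y|_C)$ via the surjection $\Omega^1_Y|_C\to \omega_C$. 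So $H^0(C,\Omega^1_Y|_C)\supseteq$ the image, which is nonzero by hypothesis. Then Proposition~\ref{P:necescond} gives that $\varphi$ does not have maximal variation, i.e.\ $C$ does not have maximal variation in $|C|$, which is the assertion.

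The main obstacle is being careful about which restriction map one uses and why it is injective: the map $H^0(Y,\Omega^1_Y)\to H^0(C,\Omega^1_Y|_C)$ is what Proposition~\ref{P:necescond} directly feeds on, and its injectivity for $C$ ample is the real content; the cleanest justification is the Lefschetz-type statement that restriction $H^1(Y,\C)\to H^1(C,\C)$ is injective for $C$ an ample divisor in the surface $Y$ (equivalently $H^0(Y,\Omega^1_Y(-C))=0$, which holds since $\Omega^1_Y(-C)$ has no sections when $C$ is ample, as any such section would, after multiplying by a power of the defining section of an ample multiple, produce a nonzero global $1$-form vanishing on an ample divisor, impossible on a smooth projective surface because global $1$-forms are closed and pull back nontrivially to a general curve in $|C|$). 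Everything else is a formal chase through the long exact sequence and the quotient $\Omega^1_Y|_C\twoheadrightarrow \omega_C$.
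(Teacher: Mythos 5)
Your reduction to Proposition~\ref{P:necescond} is the same as the paper's, and your argument is correct, but you run it on the Serre-dual side. The paper proves $H^1(C,T_{Y|C}\otimes\omega_C)\ne 0$ by restricting $T_Y\otimes\omega_Y(C)$ to $C$ and exhibiting a surjection $H^1(C,T_{Y|C}\otimes\omega_C)\to H^2(Y,T_Y\otimes\omega_Y)\cong H^0(Y,\Omega^1_Y)^\vee\ne 0$, the required vanishing $H^2(Y,T_Y\otimes\omega_Y(C))\cong H^0(Y,\Omega^1_Y(-C))=0$ being supplied by Nakano's theorem. You instead prove the Serre-dual statement $H^0(C,\Omega^1_{Y|C})\ne 0$ by injecting $H^0(Y,\Omega^1_Y)$ into it; the two arguments are dual to one another, and the connecting vanishing $H^0(Y,\Omega^1_Y(-C))=0$ is dual to the paper's $H^2$-vanishing. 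Of your two justifications for injectivity, keep the second: Lefschetz for the ample divisor $C$ gives injectivity of $H^1(Y,\mathbb{Q})\to H^1(C,\mathbb{Q})$, hence of $H^{1,0}(Y)\to H^{1,0}(C)$, and since this factors as $H^0(Y,\Omega^1_Y)\to H^0(C,\Omega^1_{Y|C})\to H^0(C,\omega_C)$ the first arrow is injective --- this is complete and even bypasses $H^0(Y,\Omega^1_Y(-C))=0$ entirely. Your first justification (that a closed holomorphic $1$-form cannot vanish along an ample divisor ``because of periods/wedge'') is not an argument as written: nothing elementary about closedness or positivity of $i\,\omega\wedge\bar\omega$ rules out vanishing along a divisor, and the correct source for $H^0(Y,\Omega^1_Y(-C))=0$ is precisely the Akizuki--Nakano vanishing that the paper cites. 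So: same skeleton, dual implementation; your Lefschetz route is a legitimate alternative, but the hand-waving paragraph should be deleted.
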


\begin{proof}
    Consider the exact sequence
      $$
      0\lra T_Y\otimes\omega_Y\lra T_Y\otimes\omega_Y(C) \lra T_{Y|C}\otimes\omega_C\lra 0
      $$
      By Serre duality we have
      $$
      H^2(Y,T_Y\otimes\omega_Y(C))\cong H^0(Y,\Omega^1_Y(-C))
      $$
      and this is zero by Nakano's vanishing theorem. Therefore we have a surjection
      $$
       H^1(C,T_{Y|C}\otimes\omega_C)\lra H^2(Y,T_Y\otimes\omega_Y)\cong H^0(Y,\Omega^1_Y)\ne 0
      $$
thus  $H^1(C,T_{Y|C}\otimes\omega_C)\ne (0)$.     From Proposition \ref{P:necescond} we deduce that $C$ does not have maximal variation in $|C|$.
\end{proof}

When $Y$ is a surface and we are assuming that $N_\varphi$ is invertible and 
$$
H^1(C,T_{Y|C}\otimes\omega_C)=0
$$
then $H^0(\sigma)$ is injective and a simple computation shows that
   $$ h^0(C,N_\varphi\otimes\omega_C)=h^0(C,\varphi^*T_Y\otimes\omega_C)-1+g = \mathrm{rk}(H^0(q))+\mathrm{rk}(H^0(\sigma))
   $$
   Therefore $\mathrm{Im}(H^0(q))$ and $\mathrm{Im}(H^0(\sigma))$ have complementary dimensions inside $H^0(C,N_\varphi\otimes\omega_C)$.

\noindent  
  When $Y$ is a regular surface the condition of Proposition \ref{P:necescond} is often satisfied. For example  nonsingular     curves in $\bP^2$ and in   $\bP^1\times\bP^1$ satisfy, as implied by theorems \ref{T:FPintro} and \ref{T:GAintro} and Corollary \ref{C:irregular}. 

  \medskip
  
  {\bf Curves on K3 surfaces.}
  Let $C$ be a very ample nonsingular curve on a K3 surface $S$. If $\Omega^1_{S|C}$ is stable then $h^1(C,\T_{S|C} \otimes\omega_C)=h^0(C,\Omega^1_{S|C})=0$,   
 In \cite{DH21}, Theorem 2.9, the authors give   conditions for $h^0(C,\Omega^1_{S|C})=0$,  weaker than the stability of $\Omega^1_{S|C}$. I do  not know examples of such $C\subset S$ having maximal variation.
 Examples of nonsingular ample curves  $C$ on a K3 surface $S$ such that $\Omega^1_{S|C}$ is not stable  have been given in \cite{GO20}, but $h^0(C,\Omega^1_{S|C})$ has not been computed for such curves.

 Assume that   
    \begin{equation}\label{E:rmK3}
        h^1(C,T_{S|C}\otimes\omega_C)>0
    \end{equation}
    Then Proposition \ref{P:necescond} predicts that $C$ does not have maximal variation in $S$. But we can say more. We have
    $$
0 < h^1(C,T_{S|C}\otimes\omega_C)=h^0(C,\Omega^1_{S|C})=h^0(C,T_{S|C})
$$

Let $0\ne \tau:\cO_C\to T_{S|C}$ be a section of $T_{S|C}$ and $\sigma:\cO_C\xrightarrow{\tau}T_{S|C}\to \omega_C$. The diagram
$$
\xymatrix{
&0\ar[r]&T_C\ar[r]\ar@{=}[d]&T_{S|C}\ar[r]&\omega_C\ar[r]&0\\
\kappa_\varphi(\sigma):&0\ar[r]&T_C\ar[r]&\E\ar[r]\ar[u]&\cO_C\ar[ul]^-\tau\ar[r]\ar[u]^-\sigma&0
}
$$
shows that $\kappa_\varphi(\sigma)\in H^1(C,T_C)$ splits. Since   $\sigma\ne 0$, the Kodaira-Spencer map $\kappa_\varphi:H^0(C,\omega_C) \lra H^1(C,T_C)$ is not injective.

   \section{Nodal curves on surfaces}\label{S:cohomonodal} 
   Let $Y$ be a nonsingular algebraic surface, $C$ a projective nonsingular connected curve,  and    $\varphi: C \lra Y$   a morphism  birational onto its image $\ol C$.   We denote by $\omega_{\ol C}$  the dualizing sheaf of $\ol C$, and by  $$
   \mathfrak{c}:=Hom_{\ol C}(\varphi_*\cO_C,\cO_{\ol C})\subset \cO_{\ol C}
   $$ 
    the conductor ideal sheaf. Recall   that 
   \begin{equation}\label{E:pushdomega}
       \varphi_*\omega_C = Hom_{\ol C}(\varphi_*\cO_C,\omega_{\ol C})= \mathfrak{c}\otimes\omega_{\ol C}
   \end{equation}
   or, equivalently, that
   $$
   \mathfrak c = \varphi_*\omega_C\otimes \omega_{\ol C}^{-1}
   $$
  In fact $\omega_C=\varphi^!\omega_{\ol C}:=Hom_{\ol C}(\varphi_*\cO_C,\omega_{\ol C})$ (\cite[Ex. III.7.2, p. 249]{rH77}) and
  $$
  \varphi_*\omega_C = \varphi_*Hom_C(\cO_C,\omega_C)=\varphi_*Hom_C(\cO_C,\varphi^!\omega_{\ol C})=Hom_{\ol C}(\varphi_*\cO_C,\omega_{\ol C})
  $$
     (see \cite[Ex. III.6.10 p. 239]{rH77}).

     \noindent
     We assume from now on in this section that \emph{$\ol C$ has only ordinary double points (nodes) as singularities}. We denote 
  by   $\I_\Delta\subset \cO_Y$  the  ideal sheaf of the reduced scheme $\Delta$ of nodes of $\ol C$.  The restriction of $\I_\Delta$ to $\ol C$ is the conductor $\mathfrak c$ and $\I_\Delta$ is the adjoint ideal sheaf of $\ol C$ (see e.g. \cite{lS79}).
  
\begin{lemma}\label{L:pushdnormal}
  Dualizing the conormal sequence
   $$
   0 \lra \cO_{\ol C}(-\ol C) \lra \Omega^1_{Y|\ol C}\lra \Omega^1_{\ol C}\lra 0
   $$
   we obtain the sequence:
   \begin{equation}\label{E:pushdnormal}
    0\lra \varphi_*\cO_C\otimes\omega_{\ol C}^{-1}\lra T_{Y|\ol C} \lra \varphi_*N_\varphi\lra 0   
   \end{equation}
   \end{lemma}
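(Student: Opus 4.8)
The plan is to apply the functor $\mathcal{H}om_{\ol C}(-,\cO_{\ol C})$ to the conormal sequence and to identify the terms of the resulting exact sequence. Since $\ol C$ is reduced, the conormal sequence is short exact on the left as well: a local function annihilating $df|_{\ol C}$, for $f$ a local equation of $\ol C$, must vanish on the dense smooth locus of $\ol C$, hence on $\ol C$. As $\Omega^1_{Y|\ol C}$ is locally free, $\mathcal{E}xt^1_{\ol C}(\Omega^1_{Y|\ol C},\cO_{\ol C})=0$, so dualizing yields the four-term exact sequence
\[
0\lra \mathcal{H}om_{\ol C}(\Omega^1_{\ol C},\cO_{\ol C})\lra T_{Y|\ol C}\lra \cO_{\ol C}(\ol C)\lra \mathcal{E}xt^1_{\ol C}(\Omega^1_{\ol C},\cO_{\ol C})\lra 0 .
\]
Write $\mathcal N$ for the image of the middle arrow. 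Then \eqref{E:pushdnormal} will follow once I establish the two isomorphisms $\mathcal{H}om_{\ol C}(\Omega^1_{\ol C},\cO_{\ol C})\cong\varphi_*\cO_C\otimes\omega_{\ol C}^{-1}$ and $\mathcal N\cong\varphi_*N_\varphi$ (compatibly with the arrows), after which one splits the displayed sequence into two short exact ones.

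For the first isomorphism I would pass to $\mathcal{H}om_{\ol C}(\Omega^1_{\ol C},\omega_{\ol C})=\mathcal{H}om_{\ol C}(\Omega^1_{\ol C},\cO_{\ol C})\otimes\omega_{\ol C}$, using that $\omega_{\ol C}$ is invertible. Since $\omega_{\ol C}$ is torsion free, this $\mathcal{H}om$ only sees $\Omega^1_{\ol C}/(\mathrm{torsion})$, and for a nodal curve the canonical surjection $\Omega^1_{\ol C}\to\varphi_*\Omega^1_C=\varphi_*\omega_C$ identifies $\Omega^1_{\ol C}/(\mathrm{torsion})$ with $\varphi_*\omega_C$ (equivalently, the class map $\Omega^1_{\ol C}\to\omega_{\ol C}$ has torsion kernel and, by a local computation at the nodes, image $\mathfrak c\,\omega_{\ol C}$, which is $\varphi_*\omega_C$ by \eqref{E:pushdomega}). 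Then Grothendieck duality for the finite morphism $\varphi$, together with $\omega_C=\varphi^!\omega_{\ol C}$ (as recalled above), gives
\[
\mathcal{H}om_{\ol C}(\Omega^1_{\ol C},\omega_{\ol C})=\mathcal{H}om_{\ol C}(\varphi_*\omega_C,\omega_{\ol C})=\varphi_*\mathcal{H}om_C(\omega_C,\omega_C)=\varphi_*\cO_C ,
\]
whence $\mathcal{H}om_{\ol C}(\Omega^1_{\ol C},\cO_{\ol C})\cong\varphi_*\cO_C\otimes\omega_{\ol C}^{-1}$.

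For the second isomorphism, observe that $T_{Y|\ol C}\to\cO_{\ol C}(\ol C)$ is dual to the inclusion $\cO_{\ol C}(-\ol C)\hookrightarrow\Omega^1_{Y|\ol C}$, so in a local equation $f$ of $\ol C$ it is $\partial\mapsto \partial(f)/f$; hence its image $\mathcal N$ equals $\J_{\ol C}\otimes_{\cO_{\ol C}}\cO_{\ol C}(\ol C)$, where $\J_{\ol C}\subset\cO_{\ol C}$ is the restriction of the Jacobian ideal $(f_x,f_y)$. Since $\ol C$ has only nodes, this Jacobian ideal is reduced and equals the adjoint ideal $\I_\Delta$, whose restriction to $\ol C$ is $\mathfrak c$; thus $\mathcal N=\mathfrak c\otimes_{\cO_{\ol C}}\cO_{\ol C}(\ol C)$. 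On the other hand $\varphi$ is an immersion — near a point of $C$ over a node it is the embedding of the smooth curve $C$ as one of the two branches — so $N_\varphi$ is invertible, and a local analysis at the preimages of the nodes shows that the conormal map $\varphi^*\cO_{\ol C}(-\ol C)\to N_\varphi^{\vee}$ is multiplication by the restriction to $C$ of the equation of the second local branch, which vanishes simply; hence $N_\varphi\cong\varphi^*\cO_{\ol C}(\ol C)(-D)$ with $D$ the reduced preimage of $\Delta$. By the projection formula $\varphi_*N_\varphi=\cO_{\ol C}(\ol C)\otimes\varphi_*\cO_C(-D)$, and $\varphi_*\cO_C(-D)=\mathfrak c$ (the pushforward of the ideal of $D$ is the conductor). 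Therefore $\varphi_*N_\varphi\cong\mathfrak c\otimes_{\cO_{\ol C}}\cO_{\ol C}(\ol C)\cong\mathcal N$, and both realize the same subsheaf of $\cO_{\ol C}(\ol C)$, so the surjection in \eqref{E:pushdnormal} is indeed $T_{Y|\ol C}\to\varphi_*N_\varphi$.

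The step requiring the most care is this second identification: one must check that at each node the restricted Jacobian ideal, the restricted adjoint ideal $\I_\Delta|_{\ol C}=\mathfrak c$, and $\varphi_*\cO_C(-D)$ genuinely coincide, and — crucially — that the two descriptions exhibit $\mathcal N$ and $\varphi_*N_\varphi$ as the \emph{same} subsheaf of $\cO_{\ol C}(\ol C)$, not merely abstractly isomorphic ones. It is worth working all of this out explicitly in the local model $xy=0$ of a node. The first identification is cleaner, resting only on Grothendieck duality (already used in this section) and the classical description of $\Omega^1_{\ol C}/(\mathrm{torsion})$ for nodal curves.
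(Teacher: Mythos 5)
Your proof is correct and follows the same overall strategy as the paper: dualize the conormal sequence, then identify the kernel term with $\varphi_*\cO_C\otimes\omega_{\ol C}^{-1}$ and the image of $T_{Y|\ol C}\to\cO_{\ol C}(\ol C)$ with $\mathfrak c\otimes\cO_{\ol C}(\ol C)=\varphi_*N_\varphi$. The differences are in how the two identifications are carried out. For the kernel, the paper passes through $\Omega^1_{\ol C}/(\mathrm{torsion})\cong\mathfrak c\otimes\omega_{\ol C}=Hom(\varphi_*\cO_C,\cO_{\ol C})\otimes\omega_{\ol C}$ and then uses reflexivity of $\varphi_*\cO_C$ to take the double dual; you instead write $\Omega^1_{\ol C}/(\mathrm{torsion})\cong\varphi_*\omega_C$ and apply duality for the finite morphism $\varphi$ (the adjunction $Hom_{\ol C}(\varphi_*\F,\G)=\varphi_*Hom_C(\F,\varphi^!\G)$ already invoked in this section) — the two computations are equivalent and equally short. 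For the image, the paper observes that $\mathrm{Im}(\partial)=\mathfrak c\otimes\cO_{\ol C}(\ol C)$ from the local description by partial derivatives and then \emph{cites} Tannenbaum (\cite[Lemma 1.5]{aT84}) for the identification $\mathrm{Im}(\partial)=\varphi_*N_\varphi$, whereas you reprove that identification directly via the local model $xy=0$: you show $N_\varphi\cong\varphi^*\cO_{\ol C}(\ol C)(-D)$ with $D$ the reduced preimage of the nodes, push forward by the projection formula using $\varphi_*\cO_C(-D)=\mathfrak c$, and check that the two subsheaves of $\cO_{\ol C}(\ol C)$ coincide (your insistence on ``same subsheaf, not merely abstractly isomorphic'' is exactly the content of the cited lemma and is the right thing to worry about). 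Your route is more self-contained at the cost of a local computation; the paper's is shorter but leans on the reference.
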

\begin{proof}
Dualizing the conormal sequence we get:
$$
0 \lra Hom(\Omega^1_{\ol C},\cO_{\ol C}) \lra T_{Y|\ol C}\xrightarrow{\partial} \cO_{\ol C}(\ol C) \lra Ext^1(\Omega^1_{\ol C},\cO_{\ol C})\lra 0
$$
and  
$$
\mathrm{Im}(\partial)=\mathfrak{c}\otimes\cO_{\ol C}(\ol C)
$$
since the map $\partial$ is locally defined by the partial derivatives of a local equation of $\ol C$. \footnote{In \cite[p. 174]{eS06} the sheaf $\mathrm{Im}(\partial)$ is denoted by $N'_{\ol C}$, and called \emph{equisingular normal sheaf} of $\ol C$.} 
Moreover (\cite[Lemma 1.5]{aT84}, see  also \cite[Lemma 3.4.15]{eS06}):
$$
\mathrm{Im}(\partial)=\varphi_*N_\varphi
$$
 On the other hand, from the exact sequence
 $$
 0\lra \tau \lra \Omega^1_{\ol C} \lra \mathfrak c \otimes  \omega_{\ol C} \lra 0
 $$
 where $\tau\subset\Omega^1_{\ol C}$ is the torsion subsheaf (see \cite[Corollary 4.13]{qL02} and \cite[Example B.9(ii)]{eS06} we deduce that:
\begin{align*}
 T_{\ol C}=Hom(\Omega^1_{\ol C},\cO_{\ol C})& = Hom(Hom(\varphi_*\cO_C,\cO_{\ol C})\otimes\omega_{\ol C},\cO_{\ol C})\\
 &=Hom(Hom(\varphi_*\cO_C,\cO_{\ol C}),\cO_{\ol C})\otimes\omega_{\ol C}^{-1}\\
 &=\varphi_*\cO_C\otimes\omega_{\ol C}^{-1}
\end{align*}

Therefore we get \eqref{E:pushdnormal}.
\end{proof}
The analysis carried out in \S \ref{S:cohomol} can be repeated  word by word in this case.  Let 
$$
\sigma\in H^0(C,N_\varphi)=H^0(\ol C,\varphi_*N_\varphi)= \mathfrak c\otimes \cO_{\ol C}(\ol C)
$$  
The IVHS of $C$ associated to $\kappa_\varphi(\sigma)\in H^1(C,T_C)$ is the coboundary map of the upper sequence of the following  pullback diagram analogous to \eqref{E:pullback}:
\begin{equation}\label{E:pullbacksing}
    \xymatrix{
  0\ar[r]&\varphi_*\cO_C\ar[r]\ar@{=}[d]&\E'\ar[r]\ar[d]&\varphi_*\omega_C\ar[d]^-\sigma\ar[r]&0 \\
 0\ar[r]&\varphi_*\cO_C\ar[r]&T_{Y|\ol C}\otimes\omega_{\ol C}\ar[r]&\mathfrak{c}\otimes\omega_{\ol C}(\ol C)\ar[r]&0 
    }
\end{equation}
where the second row is \eqref{E:pushdnormal} tensored by $\omega_{\ol C}$. Unfortunately this diagram is not directly helpful in computing the  variation of $\kappa_\varphi(\sigma)$. In the next section we will switch to plane curves and we will add  the sheaf of logarithmic vector fields to our arsenal.


   \section{IVHS of plane nodal curves}

    We fix   an irreducible plane curve $\ol C: F(X,Y,Z)=0$ of degree $d \ge 4$ having at most nodal singularities (ordinary double points). We let   $\varphi:C \lra \bP^2$ be the morphism induced by the normalization map of $\ol C$ and assume that $C$ has genus $g \ge 1$.  We introduce the (locally free) sheaf of vector fields on $\bP^2$ which are tangent to $\ol C$. It is defined by the following commutative diagram with exact  rows:
  \begin{equation}\label{E:keynodal1}
   \xymatrix{
   &0\ar[d]&0\ar[d]\\
   &T_{\bP^2}(-d)\ar[d]\ar@{=}[r]&T_{\bP^2}(-d)\ar[d]\\
   0 \ar[r]& \olTC \ar[d]\ar[r]& T_{\bP^2}\ar[d] \ar[r]& \varphi_*N_\varphi\ar[r]\ar@{=}[d]& 0 \\
   0 \ar[r]&\varphi_*\cO_C\otimes\omega_{\ol C}^{-1}\ar[d] \ar[r]&T_{\bP^2|\ol C}\ar[r]\ar[d]&\varphi_*N_\varphi\ar[r]&0 \\
   &0&0
   }
   \end{equation}
   The middle row shows that:
   \begin{equation*}
      H^1(\bP^2,\olTC)\cong H^0(C,N_\varphi)/H^0(\bP^2,T_{\bP^2}) 
   \end{equation*}
   Since $\omega_{\ol C}=\cO_{\ol C}(d-3)$ and $\varphi_*N_\varphi=\mathfrak c\otimes\cO_C(d)$, tensoring by $\cO_{\bP^2}(d-3)$ we obtain:
   \begin{equation}\label{E:keynodal2}
   \xymatrix{
   &0\ar[d]&0\ar[d]\\
   &T_{\bP^2}(-3)\ar[d]\ar@{=}[r]&T_{\bP^2}(-3)\ar[d]\\
   0 \ar[r]& \olTC(d-3) \ar[d]\ar[r]& T_{\bP^2}(d-3)\ar[d] \ar[r]& \mathfrak c \otimes\omega_{\ol C}(d)\ar[r]\ar@{=}[d]& 0 \\
   0 \ar[r]&\varphi_*\cO_C\ar[d] \ar[r]&T_{\bP^2|\ol C}\otimes\omega_{\ol C}\ar[r]\ar[d]&\mathfrak c \otimes\omega_{\ol C}(d)\ar[r]&0 \\
   &0&0
   }
   \end{equation}

\begin{lemma}\label{L:TC1}
The left vertical sequence of \eqref{E:keynodal2} induces an  isomorphism:
 $$
 H^1(\bP^2,\olTC(d-3))\cong H^1(C,\cO_C)
 $$
\end{lemma}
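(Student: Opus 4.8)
The plan is to extract the claimed isomorphism from the long exact cohomology sequence of the left vertical column of \eqref{E:keynodal2}, namely
$$
0 \lra T_{\bP^2}(-3) \lra \olTC(d-3) \lra \varphi_*\cO_C \lra 0,
$$
together with standard vanishing on $\bP^2$. Taking cohomology, the relevant stretch reads
$$
H^1(\bP^2,T_{\bP^2}(-3)) \lra H^1(\bP^2,\olTC(d-3)) \lra H^1(\bP^2,\varphi_*\cO_C) \lra H^2(\bP^2,T_{\bP^2}(-3)).
$$
So I would first check that $H^1(\bP^2,T_{\bP^2}(-3))=0$ and $H^2(\bP^2,T_{\bP^2}(-3))=0$; then the middle map is an isomorphism $H^1(\bP^2,\olTC(d-3))\xrightarrow{\sim} H^1(\bP^2,\varphi_*\cO_C)$, and since $\varphi$ is finite we have $H^1(\bP^2,\varphi_*\cO_C)=H^1(C,\cO_C)$, which gives the statement.

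For the vanishing of the two cohomology groups of $T_{\bP^2}(-3)$, I would use the Euler sequence $0\to\cO_{\bP^2}(-3)\to\cO_{\bP^2}(-2)^{\oplus 3}\to T_{\bP^2}(-3)\to 0$. Since $h^i(\bP^2,\cO_{\bP^2}(-2))=0$ for all $i$ and $h^i(\bP^2,\cO_{\bP^2}(-3))=0$ for $i=0,1$ while $h^2(\bP^2,\cO_{\bP^2}(-3))=1$ (Serre duality against $\cO_{\bP^2}(0)$), the long exact sequence yields $H^1(\bP^2,T_{\bP^2}(-3))=0$ and an injection $H^2(\bP^2,T_{\bP^2}(-3))\hookrightarrow H^3(\bP^2,\cO_{\bP^2}(-3))=0$, so both groups vanish. (Alternatively, invoke Bott's formula for $H^q(\bP^n,\Omega^p(k))$ directly.) This is entirely routine.

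The only point requiring a word of care is the identification $H^1(\bP^2,\varphi_*\cO_C)=H^1(C,\cO_C)$: because $\varphi$ is a finite morphism, $R^q\varphi_*\cO_C=0$ for $q>0$, so the Leray spectral sequence degenerates and $H^i(\bP^2,\varphi_*\cO_C)=H^i(C,\cO_C)$ for all $i$; this is the same reasoning already used implicitly in \eqref{E:keynodal1}--\eqref{E:keynodal2}. I do not expect any genuine obstacle here — the lemma is a direct cohomological consequence of the diagram already set up, and the main (trivial) task is just bookkeeping the vanishing of the twisted tangent bundle cohomology on $\bP^2$.
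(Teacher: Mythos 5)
There is a genuine error in your computation of $H^1(\bP^2,T_{\bP^2}(-3))$: this group is \emph{not} zero. From your own twisted Euler sequence $0\to\cO_{\bP^2}(-3)\to\cO_{\bP^2}(-2)^{\oplus 3}\to T_{\bP^2}(-3)\to 0$, the relevant stretch of the long exact sequence is
$$
H^1(\cO_{\bP^2}(-2))^{\oplus 3}\lra H^1(T_{\bP^2}(-3))\lra H^2(\cO_{\bP^2}(-3))\lra H^2(\cO_{\bP^2}(-2))^{\oplus 3},
$$
i.e. $0\to H^1(T_{\bP^2}(-3))\to \mathbb C\to 0$, so $H^1(\bP^2,T_{\bP^2}(-3))\cong\mathbb C$. (This is consistent with the identification $T_{\bP^2}(-3)\cong T_{\bP^2}\otimes\omega_{\bP^2}\cong\Omega^1_{\bP^2}$ and $h^1(\Omega^1_{\bP^2})=h^{1,1}(\bP^2)=1$; the Bott formula you mention as an alternative would have flagged this.) Your vanishing $H^2(\bP^2,T_{\bP^2}(-3))=0$ is correct and gives surjectivity of $H^1(\bP^2,\olTC(d-3))\to H^1(C,\cO_C)$, exactly as in the paper. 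But your argument for injectivity collapses, since the image of the nonzero group $H^1(T_{\bP^2}(-3))$ in $H^1(\olTC(d-3))$ is not automatically zero.

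To close the gap you need one more input. The paper does it by a dimension count: it cites \cite[Prop.~3.1]{DS14} for the equality $h^1(\bP^2,\olTC(d-3))=h^1(C,\cO_C)$, so that the surjection is forced to be an isomorphism. Alternatively, one can use $H^0(\bP^2,\olTC(d-3))=0$ (established later in the paper from the absence of syzygies of degree $\le d-2$ among $F_X,F_Y,F_Z$): the long exact sequence then shows that the coboundary $H^0(C,\cO_C)\to H^1(T_{\bP^2}(-3))$ is an injection of one-dimensional spaces, hence an isomorphism, which kills the image of $H^1(T_{\bP^2}(-3))$ in $H^1(\olTC(d-3))$ and yields injectivity. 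Either way, some nontrivial fact about $\olTC$ beyond cohomology of $T_{\bP^2}(-3)$ is required; the lemma is not the pure bookkeeping exercise you describe.
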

\begin{proof} 
  We have $H^2(\bP^2,T_{\bP^2}(-3))=(0)$. Therefore the map
  $$
  H^1(\bP^2,\olTC(d-3))\lra H^1(\ol C,\varphi_*\cO_C)=H^1(C,\cO_C)
  $$
  is surjective. But since $h^1(\bP^2,\olTC(d-3))=h^1(C,\cO_C)$  \cite[ Prop. 3.1]{DS14}, the map is an isomorphism.
\end{proof}

   
 Consider the commutative and exact diagram:
 \begin{equation}\label{E:keydiag2}
    \xymatrix{
    0\ar[r]&\olTC\ar@{=}[d]\ar[r]& \cO_{\bP^2}(1)^3\ar[d]\ar[r]^-\partial&\I_\Delta(d)\ar[r]\ar[d]&0
    \\
0\ar[r]&\olTC\ar[r]&T_{\bP^2}\ar[r]&\mathfrak{c}\otimes\cO_{\ol C}(d)\ar[r]&0
}
\end{equation}
where $\I_\Delta=\wt{J_F}$ is the adjoint ideal sheaf,  which is defined by the jacobian homogeneous ideal  $J_F=(F_X, F_Y, F_Z)$ and $\partial$ is defined by $F_X, F_Y, F_Z$.  The restriction of $\I_\Delta(d)$ to $\ol C$ is   $\mathfrak{c}\otimes\cO_{\ol C}(d)=\varphi_*N_\varphi$, and the map 
$$
H^0(\bP^2,\I_\Delta(d)) \lra H^0(\ol C,\varphi_*N_\varphi), \quad \wt \sigma\mapsto \sigma
$$
is surjective.

 Twisting by $\cO_{\bP^2}(d-3)$, we  replace \eqref{E:keynodal2} by
\begin{equation*}
  \xymatrix{
    0\ar[r]&\olTC(d-3)\ar@{=}[d]\ar[r]& \cO_{\bP^2}(d-2)^3\ar[d]\ar[r]^-\partial&\I_\Delta(2d-3)\ar[r]\ar[d]&0
    \\
0 \ar[r]&\varphi_*\cO_C \ar[r]&T_{\bP^2|\ol C}\otimes\omega_{\ol C}\ar[r]&\mathfrak c \otimes\omega_{\ol C}(d)\ar[r]&0
}  
\end{equation*}
    We can associate to every $\sigma\in H^0(\ol C, \varphi_*N_\varphi)$ the following pullback diagram:
   \begin{equation}\label{E:pullbacksing2}
    \xymatrix{
    0\ar[r]&\olTC(d-3)\ar@{=}[d] \ar[r]&\F\ar[r]\ar[d]&\I_\Delta(d-3)\ar[r]\ar[d]^-{\wt \sigma}&0 \\
    0\ar[r]&\olTC(d-3) \ar[r]& \cO_{\bP^2}(d-2)^3 \ar[r]^-\partial&\I_\Delta(2d-3)\ar[r]&0
    }   
   \end{equation}
where  $\wt \sigma\in H^0(\bP^2,\I_\Delta(d))$ and $\wt \sigma\mapsto \sigma$.
Recalling Lemma \ref{L:TC1} and that the restriction   to $\ol C$ induces an isomorphism
$$
 H^0(\bP^2,\I_\Delta(d-3)) \cong H^0(\ol C,\varphi_*\omega_C)
$$
we see that the coboundary map of the upper sequence is the IVHS of $C$ associated to $\kappa_\varphi(\sigma)$. We will use this diagram to prove Theorem \ref{T:nodalintro}.


\section{Proof of Theorem \ref{T:nodalintro}}

 From the discussion of the previous section it follows that Theorem \ref{T:nodalintro} is a consequence of the following:

 \begin{prop}\label{T:ffinal}
     Let $\ol C\subset \bP^2$ be a nodal irreducible curve of degree $d\ge 3$ with $n\le \binom{d-1}{2}-1$ nodes and no other singularities and denote by $\I_\Delta\subset \cO_{\bP^2}$   the ideal sheaf of the scheme $\Delta\subset\bP^2$ of nodes of $\ol C$.  Let $\wt \sigma\in H^0(\bP^2,\I_\Delta(d))$ be a general element.
     Then the coboundary map of the upper sequence of diagram \eqref{E:pullbacksing2} is an  isomorphism
$$
H^0(\bP^2,\I_{\Delta}(d-3)) \xrightarrow{\cong} H^1(\bP^2,\olTC(d-3)) 
$$
 \end{prop}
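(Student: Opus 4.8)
The plan is to convert the statement into an injectivity statement, then into a transversality statement for a multiplication map, and finally to settle the latter by a dimension count on an incidence variety.

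\emph{Step 1: reduction to injectivity, and vanishing of $H^0(\olTC(d-3))$.} By Lemma~\ref{L:TC1}, $h^1(\bP^2,\olTC(d-3))=h^1(C,\cO_C)=g$, while the isomorphism $H^0(\bP^2,\I_\Delta(d-3))\cong H^0(\ol C,\varphi_*\omega_C)=H^0(C,\omega_C)$ recalled in \S4 gives $h^0(\bP^2,\I_\Delta(d-3))=g=\binom{d-1}{2}-n$. So the coboundary is a map between two $g$-dimensional spaces, and it suffices to prove it injective. The same ingredients force $H^0(\bP^2,\olTC(d-3))=0$: in the left column $0\to T_{\bP^2}(-3)\to\olTC(d-3)\to\varphi_*\cO_C\to0$ of \eqref{E:keynodal2} one has $H^0(T_{\bP^2}(-3))=0$ and $H^1(T_{\bP^2}(-3))\cong\mathbb C$, so if the connecting map $\mathbb C=H^0(\varphi_*\cO_C)\to H^1(T_{\bP^2}(-3))$ were not an isomorphism, then $h^1(\olTC(d-3))$ would be $g+1$, contradicting Lemma~\ref{L:TC1}.

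\emph{Step 2: reduction to an intersection condition.} As $\wt\sigma$ is a nonzero section, multiplication by it is injective, so $\F$ in \eqref{E:pullbacksing2} embeds in $\cO_{\bP^2}(d-2)^3$ as the subsheaf of triples $(A,B,C)$ with $AF_X+BF_Y+CF_Z$ locally divisible by $\wt\sigma$ and quotient in $\I_\Delta(d-3)$; a local check at the nodes (only $\wt\sigma\ne0$ is used) then gives
$$
H^0(\bP^2,\F)=\bigl\{(A,B,C)\in H^0(\cO(d-2))^3 : AF_X+BF_Y+CF_Z\in\wt\sigma\cdot H^0(\bP^2,\I_\Delta(d-3))\bigr\}.
$$
Combined with $H^0(\olTC(d-3))=0$ and the cohomology sequence of the top row of \eqref{E:pullbacksing2}, the coboundary is injective if and only if $H^0(\F)=0$, i.e. if and only if
$$
(J_F)_{2d-3}\ \cap\ \bigl(\wt\sigma\cdot H^0(\bP^2,\I_\Delta(d-3))\bigr)=(0)
$$
inside $H^0(\bP^2,\cO(2d-3))$, where $(J_F)_{2d-3}$ is the degree $2d-3$ component of the Jacobian ideal $J_F=(F_X,F_Y,F_Z)$. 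Both subspaces lie in $H^0(\bP^2,\I_\Delta(2d-3))$, and \eqref{E:keydiag2} twisted by $\cO(d-3)$ shows $H^0(\I_\Delta(2d-3))/(J_F)_{2d-3}=H^1(\bP^2,\olTC(d-3))\cong H^1(C,\cO_C)$.

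\emph{Step 3: the dimension count and the key surjectivity.} It suffices to show a general $\wt\sigma$ lies outside the first projection of the closed incidence variety
$$
\cZ=\bigl\{(\wt\sigma,[Q])\in H^0(\bP^2,\I_\Delta(d))\times\bP\bigl(H^0(\bP^2,\I_\Delta(d-3))\bigr) : \wt\sigma\,Q\in(J_F)_{2d-3}\bigr\};
$$
the projective factor being complete, this holds as soon as $\dim\cZ<h^0(\I_\Delta(d))$. Since $h^0(\I_\Delta(d))-\dim\bP(H^0(\I_\Delta(d-3)))=\binom{d+2}{2}-\binom{d-1}{2}+1=3d+1$, it is enough that each fibre of $\cZ\to\bP(H^0(\I_\Delta(d-3)))$ has dimension $\le3d$. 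That fibre over $[Q]$ is the kernel of the linear map $\mu_Q\colon H^0(\I_\Delta(d))\to H^0(\I_\Delta(2d-3))/(J_F)_{2d-3}\cong H^1(C,\cO_C)$, $h\mapsto[Qh]$; as the target is $g$-dimensional and $h^0(\I_\Delta(d))-g=3d$, one always has $\dim\ker\mu_Q\ge3d$, with equality exactly when $\mu_Q$ is surjective. Thus the proposition reduces to: \emph{for every nonzero $Q\in H^0(\bP^2,\I_\Delta(d-3))$ the map $\mu_Q$ is surjective}, equivalently that $H^0(C,N_\varphi)\xrightarrow{\cdot\varphi^*Q}H^0(C,N_\varphi\otimes\omega_C)\xrightarrow{\partial}H^1(C,\cO_C)$ is onto, $\partial$ being the coboundary of $\zeta_\varphi$ and $\varphi^*Q\in H^0(C,\omega_C)$ the canonical section pulled back from the adjoint $Q$. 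Granted this, every fibre has dimension exactly $3d$, so $\dim\cZ=3d+g-1<g+3d=h^0(\I_\Delta(d))$; the first projection of $\cZ$ is then a proper closed subset, a general $\wt\sigma$ avoids it, $\mu_{\wt\sigma}$ is injective and hence (equal dimensions) an isomorphism, as claimed.

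I expect the surjectivity of $\mu_Q$ to be the main obstacle and the technical heart of \S5. The difficulty is uniformity: it must hold for \emph{every} nodal curve with $n\le\binom{d-1}{2}-1$ nodes, not just a general one, so genericity of $F$ is unavailable (e.g. one cannot quote the Weak Lefschetz property for a general Artinian Gorenstein quotient), and for a truly nodal curve $\mathbb C[X,Y,Z]/J_F$ is not Artinian, so Macaulay duality between multiplications in complementary degrees does not apply verbatim. Likely routes: work on $C$ with a base-point-free pencil trick on a pencil inside the $g^2_d$ cut by $\varphi$, using that $\varphi^*Q$ is a general canonical divisor; or use the self-duality up to twist of the finite module $\widetilde{J_F}/J_F$ — valid for curves with isolated singularities — to transpose this surjectivity into an injectivity of a multiplication map in a \emph{low} degree, where it is elementary (for $d=4$ it reduces to the non-vanishing of the product of a linear form by a form of degree $\le2$); or degenerate $\ol C$, keeping $n$ fixed, to a curve with nodes in special position where $\mu_Q$ is directly computable, concluding by semicontinuity of $\dim\ker\mu_Q$.
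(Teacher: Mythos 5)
Your Steps 1 and 2 are correct and coincide with the paper's own reduction: using Lemma \ref{L:TC1}, the identification $H^0(\bP^2,\I_\Delta(d-3))\cong H^0(C,\omega_C)$, and the vanishing $H^0(\bP^2,\olTC(d-3))=0$ (which you derive cleanly from the left column of \eqref{E:keynodal2}, whereas the paper quotes the absence of low-degree syzygies among $F_X,F_Y,F_Z$), the statement becomes exactly $\wt\sigma\cdot H^0(\I_\Delta(d-3))\cap (J_F)_{2d-3}=(0)$ inside $H^0(\I_\Delta(2d-3))$, with quotient $H^1(C,\cO_C)$ of dimension $g$. Up to this point you have reproduced the paper's set-up faithfully.

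The gap is in Step 3, and it is twofold. First, you do not prove the statement you reduce to — you explicitly defer ``the surjectivity of $\mu_Q$'' as the expected technical heart — so the argument is incomplete as written. Second, and more seriously, that statement is \emph{false} in the generality you need. Take $\ol C$ the smooth Fermat curve $F=X^d+Y^d+Z^d$ with $d\ge 5$ (so $n=0$, which is allowed), and $Q=X^{d-3}\in H^0(\cO(d-3))$. The Jacobian ring $A=R/J_F$ is Artinian Gorenstein with socle degree $3(d-2)$, and by Macaulay duality the surjectivity of $\mu_Q\colon A_d\to A_{2d-3}$ is equivalent to the injectivity of $\cdot Q\colon A_{d-3}\to A_{2d-6}$; but $Q\cdot X^{d-3}=X^{2d-6}=X^{d-5}\cdot X^{d-1}\in (J_F)_{2d-6}$, so this multiplication kills $X^{d-3}\ne 0$ and $\mu_Q$ is not surjective. (This is consistent with the Fermat curve being the extremal case in Theorem \ref{T:FPintro}(ii).) Consequently some fibres of your incidence variety $\cZ\to\bP(H^0(\I_\Delta(d-3)))$ have dimension $>3d$, and your count $\dim\cZ=3d+g-1$ fails. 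The approach could in principle be salvaged by a stratified estimate — showing that for each $k\ge 1$ the locus of $[Q]$ with $\mathrm{corank}(\mu_Q)\ge k$ has dimension $<g-k$ — but that is precisely where the real work lies and none of it is supplied; your three suggested routes are not carried out, and the Macaulay-duality route runs directly into the counterexample above. The paper avoids this issue entirely: instead of a dimension count over all $Q$, it \emph{constructs} a good $\wt\sigma$ by choosing a general basis $F_1,F_2,F_3$ of $(J_F)_{d-1}$, decomposing $F_1\cap F_2=\Delta\cup Z\cup Y$ via a Cayley--Bacharach argument (Lemma \ref{L:Zexists2}) so that $h^0(\I_{Z\cup\Delta}(d-2))=0=h^0(\I_Y(d-3))$, and taking $\wt\sigma$ vanishing on $\Delta\cup Z$ but at no point of $Y$; the intersection is then shown to be zero by a direct diagram chase. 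You would need to either import that construction or establish the stratified bound to close the argument.
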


For the proof of Proposition \ref{T:ffinal} we need a lemma.

\begin{lemma}\label{L:Zexists2}
    In the situation of Proposition \ref{T:ffinal}, let $F_1,F_2,F_3$ be a general basis of $J_{F,d-1}=\langle F_X,F_Y,F_Z\rangle$. Denote by the same symbols the curves they define. Then for each $i\ne j$ we have:
    \begin{itemize}
        \item[(I)] $F_i \cap F_j=\Delta\cup\Sigma_{ij}$, where $\Sigma_{ij}$ consists of $(d-1)^2-n$ distinct points and $\Sigma_{ij}\cap \Delta = \emptyset$.
        \item[(II)] For fixed $i\ne j$ it is possible to decompose $\Sigma_{ij}= Z\cup Y$, with $Z\cap Y=\emptyset$, $\ell(Z)=\binom{d}{2}-n$, $\ell(Y)=\binom{d-1}{2}$, and
        $$
        h^0(\bP^2,\I_{Z\cup \Delta}(d-2))=0=h^0(\bP^2,\I_{Y}(d-3))
        $$
    \end{itemize}
\end{lemma}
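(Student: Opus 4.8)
\textbf{Proof proposal for Lemma \ref{L:Zexists2}.}

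The plan is to set up a generic liaison (linkage) argument governed by the complete intersection $F_i\cap F_j$ of two general members of the net $J_{F,d-1}$. First I would establish part (I). Each $F_i$ is a curve of degree $d-1$ passing (with multiplicity one, since the nodes are ordinary) through the reduced scheme $\Delta$ of $n$ nodes. Two general members $F_i,F_j$ of the net meet in a complete intersection of degree $(d-1)^2$; by Bézout this decomposes as $\Delta$ (of length $n$) together with a residual subscheme $\Sigma_{ij}$ of length $(d-1)^2-n$. For a \emph{general} pair, Bertini-type arguments on the net (the net has no base points outside $\Delta$, since the Jacobian ideal of a nodal curve is base-point-free away from the singular locus) give that $\Sigma_{ij}$ is reduced, consists of $(d-1)^2-n$ distinct points, and is disjoint from $\Delta$: genericity of $F_i,F_j$ forces the intersection to be transverse away from the fixed locus $\Delta$, and a dimension count shows the general member of the net avoids being tangent to $F_i$ along $\Delta$ as well. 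Since $n\le \binom{d-1}{2}-1 < (d-1)^2$, the residual $\Sigma_{ij}$ is nonempty.

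For part (II) the key input is liaison with respect to the complete intersection $X := F_i\cap F_j$ of type $(d-1,d-1)$. The linked scheme of $\Delta\cup\Sigma_{ij}$ inside $X$ — and more precisely, I want to understand the Hilbert function of the residual pieces. Write $e := \binom{d}{2}-n$ and $f := \binom{d-1}{2}$; note $e + f = \binom{d}{2}+\binom{d-1}{2}-n = (d-1)^2 + (d-1) - (d-1)^2/\ldots$ — rather, one checks directly $e+f = (d-1)^2-n = \ell(\Sigma_{ij})$, so the desired decomposition $\Sigma_{ij}=Z\cup Y$ with $\ell(Z)=e$, $\ell(Y)=f$ is at least numerically consistent. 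To realize it with the stated cohomological properties I would argue as follows. A general subscheme $Y\subset \Sigma_{ij}$ of length $f=\binom{d-1}{2}=h^0(\cO_{\bP^2}(d-3))$ imposes independent conditions on curves of degree $d-3$, i.e. $h^0(\bP^2,\I_Y(d-3))=0$, provided $\Sigma_{ij}$ is in sufficiently general position; this uses that a general set of $\binom{d-1}{2}$ points in $\bP^2$ is the base locus of no curve of degree $d-3$, together with the fact that $\Sigma_{ij}$, being cut out by a general pencil inside a net of curves with no unexpected geometry, behaves generically with respect to such linear systems. Then I set $Z := \Sigma_{ij}\setminus Y$, of length $e$, and I must show $h^0(\bP^2,\I_{Z\cup\Delta}(d-2))=0$, i.e. that $Z\cup\Delta$ (of total length $e+n = \binom{d}{2} = h^0(\cO_{\bP^2}(d-2))$) imposes independent conditions on curves of degree $d-2$.

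This last vanishing is the main obstacle, because $\Delta$ is \emph{not} a general set of points — it is the prescribed node scheme — and $Z$ is its complement inside a linkage-constrained set, so I cannot simply invoke generic position. The route I would take is the linked/adjoint-curve interpretation: $Z\cup\Delta$ is linked to $Y$ inside the complete intersection $X$ of type $(d-1,d-1)$, and liaison theory (the Davis–Geramita–Orecchia / CI-liaison duality of Hilbert functions) relates the failure of $Z\cup\Delta$ to impose independent conditions in degree $d-2$ to the behaviour of the linked scheme $Y$ in the complementary degree $(d-1)+(d-1)-3-(d-2) = d-3$. Concretely, the Hilbert function relation for linked schemes in $\bP^2$ gives that $h^1(\bP^2,\I_{Z\cup\Delta}(d-2))$ is governed by $h^0$ of the residual scheme $Y$ in degree $d-3$ (up to the contribution of the two forms of degree $d-1$), so the two stated vanishings $h^0(\I_{Z\cup\Delta}(d-2))=0$ and $h^0(\I_Y(d-3))=0$ are in fact \emph{equivalent} under the liaison correspondence, not two independent facts. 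Hence it suffices to prove the single statement $h^0(\bP^2,\I_Y(d-3))=0$ for a general splitting, and then the other follows automatically. The hypothesis $n\le \binom{d-1}{2}-1$ is exactly what guarantees $\ell(Y)=\binom{d-1}{2}\le \ell(\Sigma_{ij})$ so that such a $Y$ can be extracted, and (combined with genericity of the net) what makes the general $Y$ of this size impose independent conditions in degree $d-3$. I would close the argument by an explicit semicontinuity/degeneration: exhibit one configuration — e.g. degenerate $\ol C$ to a curve whose nodes and residual Jacobian intersection are in linearly general position, or use a monomial/toric model — for which $\binom{d-1}{2}$ of the residual points fail to lie on any curve of degree $d-3$, and conclude by openness that the general $F_i,F_j$, and the general splitting, work.
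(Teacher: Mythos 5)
Your part (I) and your identification of the liaison/Cayley--Bacharach link between the two vanishings are sound and close to the paper: the paper also notes (citing Semple--Roth) that, since no curve of degree $d-3$ contains the complete intersection $F_1\cap F_2$, the vanishing $h^0(\I_{Z\cup\Delta}(d-2))=0$ forces $h^0(\I_Y(d-3))=0$, so only one vanishing needs to be produced. The genuine gap is in how you propose to produce it. You reduce to showing that a general length-$\binom{d-1}{2}$ subscheme $Y\subset\Sigma_{ij}$ imposes independent conditions on curves of degree $d-3$, and you justify this by saying $\Sigma_{ij}$ ``behaves generically'' and by invoking a degeneration of $\ol C$ plus semicontinuity. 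Neither step is valid as stated. First, $\Sigma_{ij}$ is residual to $\Delta$ in a complete intersection of type $(d-1,d-1)$, not a general point set, so general position of subsets of it cannot be assumed and no argument is given; moreover the exchange trick that works in degree $d-2$ does not transfer directly to $Y$ in degree $d-3$, because the point you would swap in could lie in $\Delta$ rather than in $\Sigma_{ij}$. Second, and more seriously, the lemma is stated for an \emph{arbitrary} fixed nodal curve $\ol C$ (the main theorem is claimed for every $[\ol C]\in\V_{d,n}$); degenerating $\ol C$ to a special configuration and concluding by openness would at best prove the statement for a general member of the Severi variety, which is not enough. You also never exhibit the one configuration your semicontinuity argument requires.

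The paper closes exactly this gap with a self-contained minimality/exchange argument that works for the given $\ol C$: assuming no good $Z$ exists, pick $T\subset\Sigma_{12}$ with $\ell(T)=\binom{d}{2}-n$ minimizing $h^0(\I_{T\cup\Delta}(d-2))>0$; failure of independent conditions yields a removable point $t\in T$, while the fact that no curve of degree $d-2$ contains the complete intersection $F_1\cap F_2$ yields a point $z\in\Sigma_{12}\setminus T$ outside the base locus of $|\I_{T\cup\Delta}(d-2)|$; swapping $t$ for $z$ strictly decreases $h^0$, contradicting minimality. This is the key idea missing from your proposal, and it is what makes the lemma hold for every nodal curve rather than a general one.
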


\begin{proof}
    (I) The linear system $|J_{F,d-1}|$ defines a rational map $\Phi:\bP^2 \dashrightarrow \bP^2$ whose base scheme is   $\Delta$.  Therefore $F_i\cap F_j= \Delta \cup \Sigma$, with   $\Delta\cap\Sigma=\emptyset$ and $\Sigma$ a general fibre of $\Phi$, by the choice of the $F_i$'s. Therefore $\Sigma$ consists of  $(d-1)^2-n$ distinct points.

    (II) To fix ideas assume $(i,j)=(1,2)$.
    Since clearly there is no curve of degree $d-3$  containing all the $(d-1)^2$ points,    the Cayley-Bacharach property implies that for every decomposition $F_1\cap F_2=\Delta\cup Z\cup Y$ with $\ell(Z)=\binom{d}{2}-n$, $\ell(Y)=\binom{d-1}{2}$, we have that $h^0(\I_{Z\cup\Delta}(d-2))=0$ implies 
 $h^0(\I_Y(d-3))=0$ (\cite{SR85}, Theorem III, p. 98).  
Therefore a decomposition $F_1\cap F_2=\Delta\cup Z\cup Y$ satisfies the conditions  stated in (II) if and only if  
$h^0(\I_{Z\cup\Delta}(d-2))=0$.

\noindent
By contradiction, suppose that such a $Z$   does not exists.
 Then for every $T\subset \Sigma$ such that $\ell(T)=\binom{d}{2}-n$ we have $h^0(\I_{T\cup\Delta}(d-2))>0$; choose   $T$    such that $h^0(\I_{T\cup\Delta}(d-2))>0$ is minimal. Since $T\cup\Delta$ does not impose independent conditions to the curves of degree $d-2$, there is $t\in T$ such that 
$$
h^0(\I_{T\cup\Delta}(d-2))=h^0(\I_{(T\cup\Delta)\setminus \{t\}}(d-2))
$$
Since $F_1\cap F_2$ is complete intersection of two curve of degree $d-1$, no curve of degree $d-2$ contains $F_1\cap F_2$. Therefore there is a point $z\in F_1\cap F_2$ not contained in   the base locus of the linear system $|H^0(\I_{T\cup\Delta}(d-2))|=|H^0(\I_{(T\cup\Delta)\setminus \{t\}}(d-2))|$. Thus we have
$$
h^0(\I_{[(T\cup\Delta)\setminus \{t\}]\cup \{z\}}(d-2))< h^0(\I_{T\cup\Delta}(d-2))
$$
contradicting the choice of $T$. 
\end{proof}

\emph{Proof of Proposition \ref{T:ffinal}}.
Let $\wt \sigma \in H^0(\bP^2,\I_\Delta(d))$ and let $D\subset \bP^2$ be the degree $d$ curve $\wt v=0$. Then diagram \eqref{E:pullbacksing2} fits into the following one:
\begin{equation}\label{E:pullbacksing3}
    \xymatrix{
    0\ar[r]&\olTC(d-3)\ar@{=}[d] \ar[r]&\F\ar[r]\ar[d]&\I_\Delta(d-3)\ar[r]\ar[d]^-{\wt \sigma}&0 \\
    0\ar[r]&\olTC(d-3) \ar[r]& \cO_{\bP^2}(d-2)\otimes V\ar[d] \ar[r]^-\partial&\I_\Delta(2d-3)\ar[r]\ar[d]&0 \\
    &&\cO_D(2d-3)(-\Delta)\ar@{=}[r]&\cO_D(2d-3)(-\Delta) 
    }   
   \end{equation}
   where $V=\langle F_X,F_Y,F_Z\rangle$. 
       In order to prove the proposition  we need  to   choose $\wt \sigma$ in such a way that 
        $$
        \wt \sigma\cdot H^0(\I_\Delta(d-3))\cap \mathrm{Im}(H^0(\partial))=(0)
        $$
        This will imply that   $H^0(\I_\Delta(d-3))\to H^1(\TC(d-3))$ is bijective, proving  the proposition.

     We choose a general basis of $V$ so that   $F_1\cap F_2$ consists of distinct points. This can be done, by Lemma \ref{L:Zexists2}. Moreover we choose $Z$ and $Y$ as in Lemma \ref{L:Zexists2}, and
       $\wt \sigma$ so that $D$  contains $\Delta\cup Z$ but no point of $Y$. 
       
       Note that $H^0(\olTC(d-3))=0$ because $\ol C$ is nodal and irreducible and therefore there are no syzygies of degree $\le d-2$ among $F_X,F_Y,F_Z$ (see \cite{DS14}, Example 2.2(i) and \cite{DS15}, Th. 4.1). In particular $H^0(\partial)$ is injective, so we will be free to identify $H^0(\cO_{\bP^2}(d-2))\otimes V$ with   $\mathrm{Im}(H^0(\partial))\subset H^0(\bP^2,\I_\Delta(2d-3))$.  
       More specifically, we identify $H^0(\bP^2,\cO(d-2))\otimes V$ with the vector subspace of $H^0(\bP^2,\I_\Delta(2d-3))$ consisting of elements  of the form $aF_1+bF_2+cF_3$, with $a,b,c\in H^0(\bP^2,\cO(d-2))$. Those for which $c=0$ are identified with $H^0(\bP^2,\cO(d-2))\otimes\langle F_1,F_2\rangle$, which in turn is  $H^0(\bP^2,\I_{\Delta\cup Z\cup Y}(2d-3))$,  since both have dimension $2\binom{d}{2}$.  We thus have the following commutative and exact diagram:
       \begin{small}
       
\begin{equation}
    \xymatrix{
    &H^0(\bP^2,\I_{\Delta\cup Z\cup Y}(2d-3))\ar[d]\ar[r]^-\alpha&H^0(D,\cO_D(2d-3)(-\Delta-Z))\ar[d] \\
    0\to H^0(\bP^2,\cO(d-3))\ar[r]^-{\wt v}\ar[d]&H^0(\bP^2,\I_\Delta(2d-3))\ar[r]^-\beta\ar[d]^-\gamma&H^0(D,\cO_D(2d-3)(-\Delta))\ar[r]\ar[d]&0 \\
    0\to H^0(\cO_Y(2d-3))\ar[r]&H^0(\cO_{Z\cup Y}(2d-3)\ar[r]&\cO_Z(2d-3)\ar[r]&0
    }
\end{equation}
      
       \end{small}
       Let $w:=aF_1+bF_2\in \ker(\alpha)$. Since $w$ vanishes on $D$, it is of the form $w=\wt \sigma h$, with $h\in H^0(\bP^2,\J_Y(d-3))$, because $\wt \sigma$ does not vanish on $Y$ but $w$ does. By  Lemma \ref{L:Zexists2}(II) we have $h=0$. Therefore $\alpha$ is injective. In fact $\alpha$ is bijective but we will not need it.

 Suppose  that  $0\ne u:=aF_1+bF_2+cF_3\in \wt \sigma\cdot H^0(\bP^2,\I_\Delta(d-3))$ for some $a,b,c\in H^0(\bP^2,\cO(d-2))$. Then $u\in H^0(\bP^2,\I_{2\Delta}(2d-3))$, i.e. it vanishes twice on $\Delta$.  Moreover, since $u\in \ker(\beta)$ and $\alpha$ is injective, we have $c\ne 0$. So $0\ne \gamma(u)=\gamma(cF_3)$, and $cF_3$ is non-zero on $Y$. Moreover $F_3$ vanishes simply on $\Delta$ and does not vanish on $Z$. Therefore $0\ne c\in H^0(\bP^2,\I_{\Delta\cup Z}(d-2))$. This   contradicts   Lemma \ref{L:Zexists2}(II) and proves Proposition \ref{T:ffinal} (and   Theorem \ref{T:nodalintro}). \hfill$\square$





\medskip\noindent
\textsc{Dipartimento di Matematica e Fisica \\ Università Roma Tre \\ L.go S.L. Murialdo, 1 -  00146 Roma, Italia.}\\
\texttt{sernesi@gmail.com}

\end{document}